\numberwithin{equation}{section}
\newtheorem{theorem}{Theorem}[section]
\newtheorem{proposition}[theorem]{Proposition}
\newtheorem{lemma}[theorem]{Lemma}
\newtheorem{remark}[theorem]{Remark}
\newcommand{\cali}[1]{\mathscr{#1}}
\newcommand{\supp}{{\rm supp}}
\newcommand{\omegaFS}{\omega_{\text{FS}}}
\newcommand{\del}{\partial}
\newcommand{\ddc}{dd^c}
\newcommand{\dbar}{\overline\partial}
\newcommand{\ep}{\epsilon}
\newcommand{\Cc}{\cali{C}}
\newcommand{\Fc}{\cali{F}}
\newcommand{\C}{\mathbb{C}}
\newcommand{\N}{\mathbb{N}}
\newcommand{\R}{\mathbb{R}}
\renewcommand\P{\mathbb{P}}
\newcommand{\lp}{\langle}
\newcommand{\rp}{\rangle}
\newcommand{\dsh}{\mathrm{DSH}}
\title{Exponential mixing property for H\'enon-Sibony maps of $\C^k$}
\author{Hao Wu}
\address{Department of Mathematics,  National University of Singapore - 10, Lower Kent Ridge Road - Singapore 119076}
\email{e0011551@u.nus.edu}
\date{}
\begin{document}
	
	\begin{abstract}
	Let $f$ be a H\'enon-Sibony map, also known as a regular polynomial automorphism of $\C^k$ and let $\mu$ be the equilibrium measure of $f$. In this paper we prove that $\mu$ is exponentially mixing for  plurisubharmonic observables.
	\end{abstract}
	
	\maketitle

	\medskip

\noindent {\bf Mathematics Subject Classification 2020:} 37F80.

\medskip

\noindent {\bf Keywords:}  equilibrium measure, exponential mixing, positive closed current.

\medskip

\section{Introduction and main results}\label{mixc1}

Let $f$ be a polynomial automorphism of $\C^k$. It can be extended to a birational map of $\P^k$. The set $I_+$ (resp. $I_-$) where $f$ (resp. $f^{-1}$) is not defined is called the \textit{indeterminacy set} of  $f$ (resp. $f^{-1}$). We say $f$ is a \textit{H\'enon-Sibony map} or a \textit{regular polynomial automorphism}  in the sense of Sibony if  $I_{+}$ and $I_-$ are non-empty and they satisfy $I_{+}\cap I_{-}=\varnothing$. There is a very large class of polynomial automorphisms of $\C^k$  satisfying these properties (see \cite{fried,sibony:panorama}). For example, every polynomial automorphism of $\C^2$ is conjugated either to a  H\'enon-Sibony map, or an elementary polynomial automorphism, which has the form  $g(z_1,z_2):=\big(az_1+p(z_2),bz_2+c\big)$, where $a,b,c$ are constants in $\C$ with $a,b\neq 0$, and $p$ a polynomial. The latter map preserves the family of lines where $z_2$ is constant.
\medskip

We first recall some basic properties of $f$. The indeterminacy sets $I_{\pm}$ are contained in the hyperplane at infinity $L_{\infty}:=\P^k\backslash \C^k$. There exists an integer  $s$ such that $\dim I_+=k-1-s$ and $\dim I_-=s-1$. The  set $I_-$ is attractive for $f$ and $I_+$ is attractive for $f^{-1}$. Moreover,  $f(L_\infty\backslash I_+)=I_-$ and $f^{-1}(L_\infty\backslash I_-)=I_+$. Denote by $d_+$ and $d_-$  the algebraic degrees of $f$ and $f^{-1}$ respectively and we have  $d_+^s=d_-^{k-s}$. When $k=2s$, we have $d_+=d_-$. In the case $k=2$, $f$ is called a \textit{generalized H\'enon map} of $\C^2$ (see \cite{fried}).

We define the \textit{Green functions}  by  $$G^+(z):=\lim_{n\to\infty} d_+^{-n}\log^+\|f^n(z)\|\quad\text{and}\quad G^-(z):=\lim_{n\to\infty} d_-^{-n}\log^+\|f^{-n}(z)\|,$$ where $\log^+:=\max\{\log,0\}$. They are H\"older continuous  and plurisubharmonic (p.s.h.\ for short) on $\C^k$ and they satisfy $G^+\circ f=d_+G^+$ and $G^-\circ f^{-1}=d_-G^-$. Define the \textit{Green currents} of bidegree $(1,1)$ by $T_+:=\ddc G^+$ and $T_-:=\ddc G^-$. Sibony showed that $f$ admits an invariant probability measure $\mu$, called the \textit{equilibrium measure} and it satisfies $\mu=T_+^s\wedge T_-^{k-s}$, which turns out to be a measure of maximal entropy (unique when $k=2$).  Hence $\mu$ plays a very important role in the study of complex dynamics.   For more dynamical properties of H\'enon-Sibony maps, the readers may refer to \cite{bedford1, bedford2,rigidity,fornaess:survey,forn,sibony:panorama}.

The current $T^s_+$ (resp. $T^{k-s}_-$) is supported in the boundary of the \textit{filled Julia set} $K_+$ (resp. $K_-$). Recall that $K_+$ (resp. $K_-$) is the set of points $z\in \C^k$ such that the orbit $\big(f^n(z)\big)_{n\in\N}$ (resp. $\big(f^{-n}(z)\big)_{n\in\N}$) is bounded in $\C^k$. We have $$K_+=\{G^+=0\},\quad K_-=\{G^-=0\} \quad\text{and} \quad\overline K_\pm \cap L_\infty=I_\pm$$ in $\P^k$. The open set $\P^k\backslash\overline K_+$ (resp. $\P^k \backslash \overline K_-$) is the immediate basin of $I_-$ for $f$ (resp. $I_+$ for $f^{-1}$). Define $K:=K_+\cap K_-$. It is a compact subset of $\C^k$ and we have $\supp(\mu)\subseteq K$. 
\medskip

It was proved in \cite{dinh-birational}  that $\mu$ is mixing. For $0<\alpha\leq2$,   Dinh \cite{dinh-decay-henon} showed that  the speed of mixing is exponential when $k=2s$ for real-valued $\Cc^\alpha$  functions.  In \cite{vigny-decay}, exponentially mixing is also achieved   for generic birational maps of $\P^k$ for $\Cc^\alpha$ observables with $0<\alpha\leq2$. However, $\Cc^\alpha$ functions  do not have good invariance properties. For example, the pull-back of a $\Cc^\alpha$ function by a birational map  may not even be continuous any more. So in this case, it is natural to ask whether the exponential mixing property holds for other spaces of test functions.  In this paper, we will extend the result of \cite{dinh-decay-henon} to a  class of  plurisubharmonic test functions. It is know that the space spanned by those functions is an important space of test functions in complex dynamics as it is invariant under the action of holomorphic or meromorphic maps. Moreover, p.s.h.\ functions may have singularities along analytic sets and this allows ones to study the action of the dynamical system on analytic sets using p.s.h.\ functions (see e.g.\ \cite{dinh-sibony:cime}).
\medskip

When $f$ is an endomorphism of $\P^k$ with algebraic degree $d\geq 2$. One can also construct the Green current $T$ and the equilibrium measure $\mu:=T^k$ by using a similar way as above. Moreover, $\mu$ is mixing for all d.s.h.\ observables and the speed is exponential (see  \cite{dinh-exponential,dinh-sibony:cime}). The advantage here is that $f$ has no singularities on $\P^k$, i.e.\ it is holomorphic everywhere. Therefore, some invariant properties and goods estimates of d.s.h.\ functions can be obtained under the action of $f$.

\medskip
In the rest of this paper, we fix a H\'enon-Sibony map  $f$ of $\C^k$. For simplicity, we assume  $k=2s$ (see also Remark \ref{3.3} and \cite{vigny-decay}). Denote $d:=d_+=d_-$. The case $k=2$ and $s=1$ is already interesting (see \cite{rigidity}). Here is the first main result of this paper.

\begin{theorem}\label{main}
	Let $f$ be a H\'enon-Sibony map of $\C^k$ as above  and assume  that $k=2s$. Let $d$ be the algebraic degree and let $\mu$ be the equilibrium measure of $f$. Then for any open neighborhood $D$ of $K$, there exists a constant $c>0$ only depending on $D$
	 such that \[\Big|\int(\varphi\circ f^n)\psi d\mu -\Big(\int\varphi d\mu\Big)\Big(\int \psi d\mu\Big)\Big|\leq cd^{-n/2}\|\varphi\|_{L^\infty(D)}\|\psi\|_{L^\infty(D)}\] for all $n\geq 0$ and all    functions $\varphi$ and $\psi$ on $\C^k$ which are bounded and p.s.h.\ on $D$.
\end{theorem}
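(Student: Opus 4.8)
The plan is to exploit the hyperbolic splitting encoded by the two Green currents $T_+,T_-$ and to estimate the correlation through a Dirichlet-type energy that contracts under the dynamics. First I would use the $f$-invariance of $\mu$ to \emph{symmetrise in time}: setting $m=\lceil n/2\rceil$ and $m'=\lfloor n/2\rfloor$, so that $m+m'=n$, invariance gives
\[
\int(\varphi\circ f^n)\,\psi\,d\mu=\int(\varphi\circ f^{m})\,(\psi\circ f^{-m'})\,d\mu .
\]
Since a bounded p.s.h.\ function minus a constant is again bounded and p.s.h., I may subtract the averages and assume $\int\varphi\,d\mu=\int\psi\,d\mu=0$, so that the product of the two averages vanishes and the quantity to bound is exactly $\int(\varphi\circ f^{m})(\psi\circ f^{-m'})\,d\mu$. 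This symmetric splitting is what eventually produces the rate $d^{-n/2}$: one half of the iterations is made to act on the stable side and the other half on the unstable side.

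Next I would rewrite the integral purely in terms of the Green currents, using $\mu=T_+^s\wedge T_-^s$ together with $f^{\ast}T_+=dT_+$ and $f_{\ast}T_-=dT_-$, equivalently $(f^{m})^{\ast}T_+^s=d^{sm}T_+^s$ and $(f^{-m'})^{\ast}T_-^s=d^{sm'}T_-^s$. Absorbing the scalar factors into the pullbacks, $(\varphi\circ f^{m})\,T_+^s=d^{-sm}(f^{m})^{\ast}(\varphi\,T_+^s)$ and $(\psi\circ f^{-m'})\,T_-^s=d^{-sm'}(f^{-m'})^{\ast}(\psi\,T_-^s)$. Writing $a_m:=d^{-sm}(f^{m})^{\ast}(\varphi\,T_+^s)$ and $b_{m'}:=d^{-sm'}(f^{-m'})^{\ast}(\psi\,T_-^s)$, both of bidegree $(s,s)$, this yields
\[
\int(\varphi\circ f^{m})(\psi\circ f^{-m'})\,d\mu=\int a_m\wedge b_{m'}.
\]
A change of variables gives $\int a_m\wedge T_-^s=\int T_+^s\wedge b_{m'}=0$, so $a_m,b_{m'}$ are genuine mean-zero deviation currents, and with the centring above the whole correlation $I_n$ equals exactly the single pairing $\int a_m\wedge b_{m'}$.

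The core estimate is then an energy inequality for this pairing. I would introduce a Dirichlet-type seminorm $\|\cdot\|_\star$, modelled on $\mathcal E(\varphi)=\int \diff\varphi\wedge\dc\varphi\wedge T_+^{s-1}\wedge T_-^{s}$, which by integration by parts is controlled by $\|\varphi\|_{L^\infty(D)}$ times a fixed mass depending only on $D$ (here one uses that $\varphi$ is bounded and p.s.h., so $\ddc\varphi\ge0$). Two things must be shown: a Cauchy--Schwarz bound $|\int a_m\wedge b_{m'}|\le\|a_m\|_\star\|b_{m'}\|_\star$, and the contraction $\|a_m\|_\star\le C\,d^{-m/2}\|\varphi\|_{L^\infty(D)}$. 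The latter is the heart of the $\ddc$-method: because $f^{\ast}T_+=dT_+$, each pullback composed with the normalisation $d^{-s}$ gains a net factor $d^{-1}$ in the energy, so $\mathcal E(a_m)\lesssim d^{-m}\mathcal E(\varphi)$, and the seminorm, being its square root, decays like $d^{-m/2}$. Combining the two halves gives $|\int a_m\wedge b_{m'}|\le C^2 d^{-(m+m')/2}\|\varphi\|_{L^\infty(D)}\|\psi\|_{L^\infty(D)}=C^2 d^{-n/2}\|\varphi\|_{L^\infty(D)}\|\psi\|_{L^\infty(D)}$, which is the claim; the square root of the energy is exactly where the exponent $1/2$ enters.

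The main obstacle is analytic rather than structural: every current above is singular, so each wedge product and each integration by parts must be justified. The products $\ddc\varphi\wedge T_+^s$ and $a_m\wedge b_{m'}$ are defined through Bedford--Taylor theory, which is available here because $\varphi,\psi$ are bounded and the potentials $G^\pm$ of $T_\pm$ are (Hölder) continuous; but the contraction step requires quantitative control of these products under pullback by $f^{m}$. A further difficulty is that $\varphi,\psi$ are only p.s.h.\ on the neighbourhood $D$ of $K$, not globally on $\P^k$, so one must localise with cut-offs supported near $K$, where $\supp(\mu)$ and the relevant masses of $T_\pm^s$ are concentrated, keep these cut-offs inside $D$, and verify that the resulting constant depends only on $D$. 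I expect the rigorous proof of $\mathcal E(a_m)\lesssim d^{-m}\mathcal E(\varphi)$ for these localised singular currents, together with the comparison $\|\cdot\|_\star\lesssim\|\cdot\|_{L^\infty(D)}$, to be the technically demanding part; should a direct treatment prove too delicate, an alternative is to regularise $\varphi,\psi$ by smooth p.s.h.\ approximants, invoke the $\mathcal C^2$ estimate of \cite{dinh-decay-henon}, and optimise the smoothing parameter against the energy-controlled approximation error.
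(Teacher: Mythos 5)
Your time-symmetrisation and the identification of the exponent $1/2$ with the forward/backward splitting are exactly right and match the paper's first step, but the core of your argument --- Cauchy--Schwarz against a Dirichlet energy plus the contraction $\|a_m\|_\star\lesssim d^{-m/2}\|\varphi\|_{L^\infty(D)}$ --- has a genuine gap, and a scaling computation shows it cannot work in the form stated. Fix a weight $\Theta=T_+^a\wedge T_-^b$ with $a+b=2s-1$. Since $f^*T_+=dT_+$ and $f^*T_-=d^{-1}T_-$, and since $f^m$ is an automorphism so that pull-back preserves total integrals,
\[
\int \diff(\varphi\circ f^m)\wedge\dc(\varphi\circ f^m)\wedge T_+^a\wedge T_-^b \;=\; d^{\,m(b-a)}\int \diff\varphi\wedge\dc\varphi\wedge T_+^a\wedge T_-^b,
\]
and symmetrically the energy of $\psi\circ f^{-m'}$ scales by $d^{\,m'(a-b)}$. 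With the weight you wrote, $T_+^{s-1}\wedge T_-^{s}$, the forward energy \emph{grows} like $d^{m}$; with the corrected weight $T_+^{s}\wedge T_-^{s-1}$ it contracts, but then the backward factor expands by exactly the compensating factor. For any common weight the product of the two energies is $d^{(b-a)(m-m')}=O(1)$ when $m\approx m'$, so a Cauchy--Schwarz with one fixed bilinear form yields no decay whatsoever (consistently with the plain $L^2(\mu)$ Cauchy--Schwarz, which by invariance gives the non-decaying bound $\|\varphi\|_{L^2(\mu)}\|\psi\|_{L^2(\mu)}$), while with two different weights there is no Cauchy--Schwarz inequality at all. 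Independently, $\int a_m\wedge b_{m'}$ is not a gradient--gradient pairing, so even the common-weight inequality you assert is unavailable: converting $\int uv\,d\mu$ into energies costs two integrations by parts and produces terms $\int v\,\ddc u\wedge(\cdots)$ plus cut-off terms of size $O(1)$, with $\ddc u\geq0$ only on $D$. Your fallback (regularise and optimise against Dinh's $\Cc^2$ estimate) also fails quantitatively: a bounded p.s.h.\ function need not be continuous, $\|\varphi_\epsilon\|_{\Cc^2}\sim\epsilon^{-2}\|\varphi\|_{L^\infty}$ while the decreasing convergence $\varphi_\epsilon\downarrow\varphi$ carries no rate uniform over the class, so no choice $\epsilon=\epsilon(n)$ produces $d^{-n/2}$ with a constant depending only on $D$; this is precisely why the paper does not interpolate.

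The paper resolves exactly the tension exhibited above by coupling the two halves through the product automorphism $F(z,w)=(f(z),f^{-1}(w))$, again a H\'enon--Sibony map, with $F^*(T_+^s\otimes T_-^s)=d^{2s}\,T_+^s\otimes T_-^s$: a single pull-back by $F$ contracts the stable and unstable directions simultaneously, replacing your two incompatible energies by one equivariant current. The correlation becomes a pairing of $d^{-2sn}(F^{n})^*(\text{current})-T_+^s\otimes T_-^s$ against test forms on $\C^k\times\C^k$, estimated by the one-sided bound of Proposition \ref{prop}, through Proposition \ref{cor} and Lemma \ref{oneside}, where only a lower bound on $\ddc\Phi$ is available. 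The price is that the test functions on $\C^k\times\C^k$ must be p.s.h., and $\varphi(z)\psi(w)$ is not p.s.h.\ when $\varphi,\psi$ are merely p.s.h.\ (Dinh's trick $(\varphi+A)(\psi+A)$ needs $\Cc^2$); the paper's key new ingredient, absent from your proposal, is Lemma \ref{3.1}: after normalising $\|\varphi\|_{L^\infty},\|\psi\|_{L^\infty}\leq1/2$, the eight quadratic combinations $\Phi_{jl}^\pm(z,w)=\varphi_j^\pm(z)\psi_l^\pm(w)$, with $\varphi_j^\pm=\varphi^2+j\varphi\pm6$, $\psi_l^+=\psi^2+l\psi+6$ and $\psi_l^-=-\psi^2-l\psi+6$, are p.s.h.\ on $D\times D$; each yields a one-sided estimate (Lemma \ref{3.2}), and two positive linear combinations recover the two-sided bound. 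Your localisation concern is handled by the extension Lemma \ref{lem}, which modifies $\varphi$ so that it is p.s.h.\ near $K$ and smooth elsewhere with $\Cc^2$ norm controlled by $\|\varphi\|_{L^\infty}$, keeping all constants dependent only on $D$.
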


\begin{remark}\rm \label{remarkbr}
	For the inequality above, note that the values of the integrals on the left hand side do not depend on the values of $\varphi$ and $\psi$ outside $K$ since $\supp(\mu)\subseteq K$. A main novelty here is that observables are not even globally defined and that requires a good extension lemma (see Lemma \ref{lem} below).  It is easy to see that this result still holds when $\varphi$ and $\psi$ are differences of bounded p.s.h.\ functions on $D$, e.g.\ when $\varphi$ and $\psi$ are of class $\Cc^2$ on $\C^k$.
\end{remark}

Another version of Theorem \ref{main} has been proved in \cite{dinh-decay-henon} for $\varphi,\psi\in \Cc^2$ and it can be extended to $\Cc^\alpha$ case, $0<\alpha\leq 2$,   using interpolation theory between Banach spaces.  In this case, one can assume $\varphi$ and $\psi$ are of class $\Cc^2$ and p.s.h.\ because we can write $\Cc^2$ functions  as  differences of $\Cc^2$ functions which are p.s.h.\ near $K$. A key step in the proof of \cite{dinh-decay-henon} is to consider the functions $$(\varphi(z)+A)(\psi(w)+A)\quad \text{and}\quad (-\varphi(z)+A)(\psi(w)-A)$$ as test functions on $\C^k\times\C^k$ for the system $(z,w)\mapsto \big(f(z),f^{-1}(w)\big)$.  These two functions are p.s.h.\ when $A$ is large enough and they play a ``linear" role in the setting of the system $(z,w)\mapsto \big(f(z),f^{-1}(w)\big)$. Some general estimates for the latter system imply the desired result.

We will use the method of \cite{dinh-decay-henon}. However, the same idea as above cannot be directly applied because the two test functions above may not be p.s.h.\ when $\varphi$ and $\psi$ are not of class $\Cc^2$. We need to introduce several  new test functions in $\C^k\times\C^k$ and prove that they satisfy good properties required in this approach (see Lemma \ref{3.1} below). 
\medskip

Recall that a function  is \textit{quasi-plurisubharmonic} (\textit{quasi-p.s.h.}\ for short) if locally it is the difference of a p.s.h.\ function and a smooth one.  A function  $u$ on $\P^k$ with values in $\R\cup\{\pm\infty\}$ is said to be \textit{d.s.h.}\ if outside a pluripolar set it is equal to a difference of two quasi-p.s.h.\ functions. Two d.s.h.\ functions are identified when they are equal out of a pluripolar set. Denote the set of d.s.h.\ functions by $\dsh(\P^k)$.  Clearly it is a vector space and equips with a norm 
\[\|u\|_{\dsh}:=\Big|\int_{\P^k} u\,\omegaFS^k\Big|+\min\|T^{\pm}\|,\] where $\omegaFS$ is the standard Fubini-Study form on $\P^k$ and the minimum is taken on all  positive closed $(1,1)$-currents $T^{\pm}$ such that  $\ddc u=T^+-T^-$.

A positive measure $\nu$ on $\P^k$ is said to be \textit{moderate} if for any bounded family $\Fc$ of d.s.h.\ functions on $\P^k$, there are constants $\alpha>0$ and $c>0$ such that \begin{equation}\label{mod}
\nu\{z\in \P^k:|\psi(z)|>M\}\leq ce^{-\alpha M}\end{equation} for $M\geq 0$ and $\psi\in\Fc$ (see \cite{dinh-exponential,dinh-dynamique,dinh-sibony:cime}). The papers \cite{dinh-exponential,dinh-birational} show that if $f$ is a H\'enon-Sibony map of $\C^k$ or more generally, a regular birational map of $\P^k$, then the equilibrium measure $\mu$ of $f$ is moderate. Using the moderate property of $\mu$, we  can remove the boundedness conditions of $\varphi$ and $\psi$, but the estimate on the  mixing will be a little bit weaker.

\begin{theorem}\label{maincor}
	Let $f$ be a H\'enon-Sibony map of $\C^k$   and assume $k=2s$. Let $d$ be the algebraic degree and let $\mu$ be the equilibrium measure of $f$. Then for any two d.s.h.\ functions $\varphi$ and $\psi$ on $\P^k$,  there exists a constant $c>0$ depending  on $\varphi,\psi$ such that \[\Big|\int(\varphi\circ f^n)\psi d\mu -\Big(\int\varphi d\mu\Big)\Big(\int \psi d\mu\Big)\Big|\leq cn^2d^{-n/2}\] for all $n\geq 0$.
\end{theorem}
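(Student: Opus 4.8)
The plan is to deduce Theorem \ref{maincor} from Theorem \ref{main} by a truncation argument, using the moderate property of $\mu$ to control the tails. Write $R_n(\varphi,\psi)$ for the left-hand side of the desired inequality; it is bilinear in $(\varphi,\psi)$ and invariant under adding a constant to $\varphi$ or to $\psi$. Using bilinearity together with the definition of $\dsh(\P^k)$ as a space of differences of quasi-p.s.h.\ functions, I would first reduce to the case where $\varphi$ and $\psi$ are themselves quasi-p.s.h.; subtracting their suprema on $\P^k$ (finite, since a quasi-p.s.h.\ function on the compact $\P^k$ is bounded above) I may also assume $\varphi\le 0$ and $\psi\le 0$. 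Since $\mu$ is moderate it charges no pluripolar set, so all integrals below are well defined, and applying the tail bound \eqref{mod} to the bounded families $\{\varphi\}$ and $\{\psi\}$ and integrating over super-level sets gives, for some $\beta>0$ and $C>0$, the estimates $\int_{\{|\varphi|>M\}}|\varphi|^2\,d\mu\le Ce^{-\beta M}$ and $\int|\psi|^2\,d\mu\le C$, and symmetrically for $\psi$.

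Next, fix a bounded open neighborhood $D$ of $K$ and, for $M\ge 1$, set $\varphi_M:=\max(\varphi,-M)$ and $\varphi_M':=\varphi-\varphi_M$, and likewise $\psi_M,\psi_M'$. Then $\varphi_M$ is quasi-p.s.h.\ with values in $[-M,0]$, while $\varphi_M'\le 0$ vanishes off $\{\varphi<-M\}$ and satisfies $|\varphi_M'|\le|\varphi|$. Because $\varphi$ is quasi-p.s.h.\ on $\P^k$, on the bounded set $D$ we have $\ddc\varphi_M\ge -c_0\,\ddc\|z\|^2$, so $\varphi_M+A\|z\|^2$ is p.s.h.\ and bounded on $D$, with $L^\infty(D)$-norm at most $M+C$, for a fixed large $A$ independent of $M$. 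Hence $\varphi_M$ is a difference of two bounded p.s.h.\ functions on $D$, and similarly for $\psi_M$, so Theorem \ref{main} together with Remark \ref{remarkbr} applies to the pair $(\varphi_M,\psi_M)$ and gives, with $c=c(D)$ independent of $M$,
\[|R_n(\varphi_M,\psi_M)|\le c\,(M+C)^2\,d^{-n/2}\le C_1 M^2 d^{-n/2}.\]

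Expanding by bilinearity, $R_n(\varphi,\psi)=R_n(\varphi_M,\psi_M)+R_n(\varphi_M',\psi)+R_n(\varphi_M,\psi_M')$. The two remainder terms I would estimate directly, without mixing: by the triangle and Cauchy--Schwarz inequalities, the $f$-invariance of $\mu$ (so that $\int|\varphi_M'\circ f^n|^2\,d\mu=\int|\varphi_M'|^2\,d\mu$), and the tail bounds above,
\[\Big|\int(\varphi_M'\circ f^n)\,\psi\,d\mu\Big|\le\Big(\int|\varphi_M'|^2\,d\mu\Big)^{1/2}\Big(\int|\psi|^2\,d\mu\Big)^{1/2}\le C_2\,e^{-\beta M/2},\]
while the product of the two single integrals $\int\varphi_M'\,d\mu$ and $\int\psi\,d\mu$ is even smaller; thus $|R_n(\varphi_M',\psi)|\le C_2'e^{-\beta M/2}$, and symmetrically $|R_n(\varphi_M,\psi_M')|\le C_2'e^{-\beta M/2}$ (here using $\int\varphi_M^2\,d\mu\le\int\varphi^2\,d\mu\le C$ and $\int(\psi_M')^2\,d\mu\le Ce^{-\beta M}$). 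Combining, $|R_n(\varphi,\psi)|\le C_1 M^2 d^{-n/2}+C_3 e^{-\beta M/2}$ for every $M\ge 1$. Finally I would optimize by choosing $M=(n\log d)/\beta$, so that $e^{-\beta M/2}=d^{-n/2}$; both terms are then $O(n^2 d^{-n/2})$, giving the claimed bound.

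I expect the only genuinely delicate point to be the reduction of the main term to Theorem \ref{main}: one must check that the truncations $\varphi_M,\psi_M$ of quasi-p.s.h.\ functions are differences of bounded p.s.h.\ functions on the neighborhood $D$ whose $L^\infty(D)$-norms grow only linearly in $M$, so that the constant furnished by Theorem \ref{main} (which depends only on $D$) remains fixed and the main term stays $O(M^2 d^{-n/2})$. Once this is in place, the tail estimates are routine consequences of moderateness and the balancing of $M$ against $n$ is immediate; it is precisely the quadratic factor $M^2$ from the two linear norms that produces the $n^2$ in the statement.
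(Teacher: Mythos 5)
Your proposal is correct and follows essentially the same route as the paper: truncate at level $M$ via $\max\{\cdot,-M\}$, use the moderate property of $\mu$ to get exponential $L^1$/$L^2$ tail bounds on the unbounded parts, apply Theorem \ref{main} (via Remark \ref{remarkbr}) to the truncations, whose relevant norms grow linearly in $M$, and balance $M^2d^{-n/2}$ against $e^{-\alpha M/2}$ by taking $M\sim (n\log d)/\alpha$. The only cosmetic difference is that the paper first reduces to $\varphi,\psi$ p.s.h.\ and negative on $D$ before truncating, whereas you carry the quasi-p.s.h.\ correction $A\|z\|^2$ through the truncation and use $L^\infty\times L^1$ versus Cauchy--Schwarz slightly differently in the cross terms; both variants are immediate.
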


It is not hard to see that one can choose a common constant $c$ for every compact family of d.s.h.\ observables. However, we do not know if the factor $n^2$ is removable but its presence seems to be natural as they somehow represent the role of the singularities of $\varphi$ and $\psi$. More precisely, those functions satisfy exponential estimates (see e.g.\ \cite{dinh-sibony:cime}), which suggest that their singularities may contribute to some factors exponentially less important than the main factor $d^{-n/2}$ in our estimate.
\medskip

	\noindent\textbf{Acknowledgements:} This work was supported by the NUS and MOE grants  AcRF Tier 1 R-146-000-248-114 and MOE-T2EP20120-0010.

\section{Estimates on iterations of positive closed currents}

In this section, we recall some known results and get a slightly more general version (see Proposition \ref{cor} below), which will be used for proving our main theorems. 

Recall that $K=\{G^+=G^-=0\}$ and $D$ is an open neighborhood of $K$.	Define $G:=\max\{G^+,G^-\}$, which is continuous and p.s.h.\ on $\C^k$ since it is equal to the maximal of two p.s.h\ functions. Observe that $K=\{G=0\}$. Fix a small positive constant $\delta$ such that $\delta<\min_{\del D}G$. Since $\P^k\backslash\overline K_+$ (resp. $\P^k \backslash \overline K_-$) is the immediate basin of $I_-$ for $f$ (resp. $I_+$ for $f^{-1}$),  we can find $U_i,V_i,i=1,2$, which are open subsets of $\P^k$, such that  \[\overline K_+\Subset U_i, \overline K_-\Subset V_i,
U_1\Subset U_2, V_1\Subset V_2,f^{-1}(U_i)\Subset U_i,f(V_i)\Subset V_i\] and $U_2\cap V_2\Subset \{G<\delta\}$. Then observe that $K\Subset U_1\cap V_1\Subset U_2\cap V_2\Subset \{G<\delta\}\Subset D$.

\medskip

We define a norm on the space of real currents with support in $\overline V_1$. Let $\omegaFS$ be the standard Fubini-Study form on $\P^k$.  Let $\Omega$ be a real $(s+1,s+1)$-current  supported in $\overline V_1$ and assume there exists a positive closed $(s+1,s+1)$-current $\Omega'$ supported in $\overline V_1$ such that $-\Omega'\leq \Omega\leq\Omega'$. Define the norm $\|\Omega\|_*$ as \[\|\Omega\|_*:=\min\big\{\|\Omega'\|,\Omega' \text{ as above} \big\},\] where $\|\Omega'\|=\lp \Omega',\omegaFS^{s-1}\rp$ is the mass of $\Omega'$. We have the following lemma.

\begin{lemma} \label{oneside}
	Let $\Omega$ be a real $\ddc$-exact $(s+1,s+1)$-current  supported in $\overline V_1$ and assume $\Omega\geq-S$ for some positive closed current $S$ supported in $\overline V_1$, then $\|\Omega\|_*\leq 2\|S\|$.
\end{lemma}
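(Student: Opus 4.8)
The plan is to exhibit an explicit admissible competitor in the minimization defining $\|\Omega\|_*$ and to compute its mass directly. The natural choice is
\[\Omega' := \Omega + 2S.\]
First I would check that $\Omega'$ is a legitimate bounding current. It is supported in $\overline V_1$ since both $\Omega$ and $S$ are. It is closed, because $\Omega$ is $\ddc$-exact and hence $d$-closed, while $S$ is closed by hypothesis. It is positive, since $\Omega+S\geq 0$ by the assumption $\Omega\geq -S$, so that $\Omega'=(\Omega+S)+S\geq 0$. Finally the two-sided bound $-\Omega'\leq\Omega\leq\Omega'$ holds: the inequality $\Omega\leq\Omega'$ reduces to $2S\geq 0$, and $-\Omega'\leq\Omega$ reduces to $2(\Omega+S)\geq 0$, both of which have just been verified. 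Thus $\Omega'$ is positive, closed, of bidegree $(s+1,s+1)$ and supported in $\overline V_1$, so it qualifies as an $\Omega'$ in the definition, giving $\|\Omega\|_*\leq\|\Omega'\|$.

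The second and decisive step is to compute $\|\Omega'\|=\lp\Omega+2S,\omegaFS^{s-1}\rp$. By linearity this splits as $\lp\Omega,\omegaFS^{s-1}\rp+2\lp S,\omegaFS^{s-1}\rp=\lp\Omega,\omegaFS^{s-1}\rp+2\|S\|$. The key point is that the first term vanishes: writing $\Omega=\ddc\Gamma$ for some current $\Gamma$ and integrating by parts on the compact manifold $\P^k$ (where there is no boundary term),
\[\lp\Omega,\omegaFS^{s-1}\rp=\lp\ddc\Gamma,\omegaFS^{s-1}\rp=\lp\Gamma,\ddc(\omegaFS^{s-1})\rp=0,\]
since $\omegaFS^{s-1}$ is a smooth closed form and therefore $\ddc$-closed. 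Here $\omegaFS^{s-1}$ carries the complementary bidegree $(s-1,s-1)=(k-(s+1),k-(s+1))$, so the pairing is well defined. Consequently $\|\Omega'\|=2\|S\|$, and combining with the previous step yields $\|\Omega\|_*\leq\|\Omega'\|=2\|S\|$, as desired.

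The only genuinely substantive point, and the one I expect to require the most care, is the vanishing $\lp\Omega,\omegaFS^{s-1}\rp=0$: this is exactly where the $\ddc$-exactness hypothesis on $\Omega$ enters, and it is what forces the mass of $\Omega'$ to be carried entirely by $S$. Everything else is a routine verification that $\Omega+2S$ belongs to the admissible class of bounding currents, so no further estimates are needed.
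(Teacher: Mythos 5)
Your proof is correct and follows essentially the same route as the paper: the paper's own argument also takes $\Omega'=\Omega+2S$, notes it is a positive closed current supported in $\overline V_1$ with $-\Omega'\leq\Omega\leq\Omega'$, and concludes $\|\Omega'\|=2\|S\|$ from the $\ddc$-exactness of $\Omega$. Your integration by parts of $\lp\Omega,\omegaFS^{s-1}\rp$ against the closed form $\omegaFS^{s-1}$ merely spells out the mass computation that the paper states in one line.
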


\begin{proof}
	Note that $\Omega+2S$ is a positive closed current  supported in $\overline V_1$ and $\Omega$ satisfies \[-(\Omega+2S)\leq \Omega\leq \Omega+2S.\] The mass of $\Omega+2S$ is $2\|S\|$ because $\Omega$ is $\ddc$-exact.
\end{proof}

We need the following estimate \cite[Proposition 2.1]{dinh-decay-henon}.

\begin{proposition}\label{prop}
	Let $f$ be  a H\'enon-Sibony map with $k=2s$. Let $R$ be a positive closed $(s,s)$-current of mass $1$ supported in $U_1$ and smooth on $\C^k$. Let $\Phi$ be a real smooth $(s,s)$-form with compact support in $V_1\cap\C^k$. Assume that $\ddc\Phi\geq 0$ on $U_2$ and $\|\ddc \Phi\|_*< \infty$. Then there exists a  constant $c>0$ independent of $R$ and $\Phi$ such that 
	\[\lp d^{-sn}(f^n)^*R-T_+^s,\Phi\rp\leq cd^{-n}\|\ddc \Phi\|_*\]  for every $n\geq 0$.
\end{proposition}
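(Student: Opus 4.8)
The plan is to reduce the estimate to the behaviour of $(f^n)^*$ on a quasi-potential, exploiting that $T_+^s$ is an eigen-current. First I would observe that $R$ and $T_+^s$ are positive closed $(s,s)$-currents of mass $1$ on $\P^k$, hence cohomologous (both lie in the class $[\omegaFS^s]$, the generator of $H^{s,s}(\P^k)$), so the $\ddc$-lemma yields a real $(s-1,s-1)$-current $U$ with $R-T_+^s=\ddc U$. Since $G^+\circ f=dG^+$ gives $f^*T_+=dT_+$ and hence $(f^n)^*T_+^s=d^{sn}T_+^s$, the current $T_+^s$ is fixed by $d^{-sn}(f^n)^*$; therefore
\[
d^{-sn}(f^n)^*R-T_+^s=d^{-sn}(f^n)^*(R-T_+^s)=\ddc\big(d^{-sn}(f^n)^*U\big).
\]
As $f$ is biholomorphic on $\C^k$ and $\Phi$ is compactly supported there, integration by parts is legitimate and turns the left-hand side of the proposition into $\lp d^{-sn}(f^n)^*U,\ddc\Phi\rp$. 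Everything is then reduced to estimating this single pairing.

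Next I would pin down $U$. Writing $T_+=\omegaFS+\ddc g_+$ with $g_+$ a bounded (H\"older) quasi-potential and telescoping the difference of $s$-th powers gives
\[
T_+^s-\omegaFS^s=\ddc\big(g_+\,\Theta\big),\qquad \Theta:=\sum_{j=0}^{s-1}\omegaFS^{\,s-1-j}\wedge T_+^{\,j},
\]
with $\Theta$ a positive closed $(s-1,s-1)$-current; combined with a smooth potential for the smooth current $R$ one may take $U=v-g_+\Theta$ with $v$ smooth. The decisive quantitative point is the growth of $(f^n)^*$ on $\Theta$: as $(f^n)^*\omegaFS$ has class $d^n[\omegaFS]$ and $(f^n)^*T_+=d^nT_+$, each summand of $(f^n)^*\Theta$ carries the factor $d^{(s-1)n}$, so $d^{-sn}(f^n)^*\Theta$ is a positive closed current of mass $O(d^{-n})$. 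This single lost power of $d$ — coming from $\Theta$ being of bidegree $s-1$ rather than $s$ — is exactly the origin of the rate $d^{-n}$, and it makes $d^{-sn}(f^n)^*U$ a current of mass $O(d^{-n})$.

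The subtle point is that this mass bound alone is too weak: it would only give $\lp d^{-sn}(f^n)^*U,\ddc\Phi\rp=O(d^{-n})\,\|\ddc\Phi\|_{C^0}$, whereas the proposition requires the much smaller $\|\ddc\Phi\|_*$. Here the two hypotheses on $\Phi$ enter, in the spirit of Lemma \ref{oneside}. Since $\overline K_+\Subset U_1$ and $f^{-1}(U_1)\Subset U_1$, the error current $d^{-sn}(f^n)^*R-T_+^s$ is supported in $f^{-n}(U_1)\subseteq U_1\Subset U_2$, exactly the region where $\ddc\Phi\ge0$, while outside $U_2$ one only has the domination $-\Omega'\le\ddc\Phi\le\Omega'$ with $\|\Omega'\|=\|\ddc\Phi\|_*$. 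I would therefore decompose $d^{-sn}(f^n)^*U$ according to its structure as a difference of positive closed currents with bounded potentials (note $g_+\circ f^n$ is bounded by $\|g_+\|_\infty$), and on each positive closed piece $P$ replace $\ddc\Phi$ by its dominating current, using $\lp P,\ddc\Phi\rp\le\lp P,\Omega'\rp\le c\,\|P\|\,\|\ddc\Phi\|_*$. The one-sided sign furnished by $\ddc\Phi\ge0$ on $U_2$ is precisely what lets one discard the uncontrolled positive contributions there and keep only the $O(d^{-n})\,\|\ddc\Phi\|_*$ term, yielding the claimed one-sided inequality.

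I expect the main obstacle to be making this last step rigorous near $L_\infty$. Because $f$ is merely birational on $\P^k$ and singular along $I_+$, the pulled-back objects $(f^n)^*U$ and $(f^n)^*\Theta$ are genuinely singular off $\C^k$, so the integrations by parts and, above all, the wedge/domination inequalities $\lp P,\ddc\Phi\rp\le\lp P,\Omega'\rp$ must be justified where the currents are not smooth; this requires controlling the mass carried near the indeterminacy set and, typically, approximating $\Omega'$ by currents with bounded potentials so that Bedford--Taylor products are available. Establishing the clean one-sided bound for the $(s-1,s-1)$-potential of mass $O(d^{-n})$ — rather than a two-sided one — is the technical heart to which the positivity of $\ddc\Phi$ on $U_2$ is tailored.
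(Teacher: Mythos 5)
First, a point of reference: the paper itself does not prove Proposition \ref{prop} --- it is quoted from \cite[Proposition 2.1]{dinh-decay-henon}, with Remark \ref{proprmk} only relaxing the smoothness of $\Phi$. So your proposal can only be compared with Dinh's original argument, and at the level of architecture you have reconstructed it correctly: pass to a quasi-potential $U$ of bidegree $(s-1,s-1)$ with $R-T_+^s=\ddc U$, gain the factor $d^{-n}$ because $(f^n)^*$ acts with norm $d^{(s-1)n}$ one degree below the invariant degree $s$, and play the positivity $\ddc\Phi\geq 0$ on $U_2$ against the $\|\cdot\|_*$-domination elsewhere.

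There is, however, a genuine gap at the decisive quantitative step, namely the claim that one may take $U=v-g_+\Theta$ with $v$ smooth and conclude that $d^{-sn}(f^n)^*U$ has mass $O(d^{-n})$. The cohomological bound $d^{(s-1)n}$ you invoke controls pullbacks of \emph{positive closed} currents --- it applies to the $\Theta$-part, and to $g_+\Theta$ after bounding $g_+\circ f^n$ on $V_1$ (note $g_+$ is bounded only away from $I_+$, so ``$\|g_+\|_\infty$'' should read $\sup_{V_1}|g_+|$, using $f^n(V_1)\subset V_1$) --- but it does not apply to $v$: since $\supp R\subset U_1$ and $U_1\cap V_1\supset K\neq\varnothing$, the support of $R$ may meet the support of $\Phi$, so no bound on $v$ near $\supp\Phi$, and no domination of $v$ by a fixed positive closed current, can hold uniformly in $R$ (nor can $v$ be smooth on all of $\P^k$, as $R$ is smooth only on $\C^k$); yet the constant $c$ must be independent of $R$. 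Moreover, if the two-sided domination of $d^{-sn}(f^n)^*U$ by closed currents of mass $O(d^{-n})$ that you assert were available, pairing with $\Omega'$ would give a \emph{two-sided} estimate with no use of the hypothesis $\ddc\Phi\geq0$ on $U_2$ --- a strong indication the mass claim is too strong. There is also a sign slip in the step meant to rescue this: for an \emph{upper} bound one discards regions where the integrand is $\leq 0$, so one needs the $R$-dependent part of $d^{-sn}(f^n)^*U$ to be $\leq 0$ on $U_2$; ``discarding the uncontrolled positive contributions'' goes the wrong way, and your $v$ carries no sign. The repair --- and the substance of the cited proof --- is to take for $v$ the canonical \emph{negative} Green-kernel quasi-potential of $R-\omegaFS^s$, so that $d^{-sn}(f^n)^*v\leq0$ kills the $U_2$-part against $\ddc\Phi\geq0$, and to observe that $V_1\setminus U_2$ lies at positive distance from $\overline{U_1}\supset\supp R$, whence $v$ is uniformly $\Cc^2$ there; only off $U_2$ does one invoke $-\Omega'\leq\ddc\Phi\leq\Omega'$ together with the $d^{(s-1)n}$ mass bound. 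These three ingredients (negativity of $v$, uniform smoothness of $v$ off $U_2$ via the support gap, and the corresponding splitting of the pairing) are exactly what your outline lacks, so the ``technical heart'' you defer is not a routine regularization matter but the proof itself.
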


\begin{remark}\label{proprmk}\rm
	Note that the support of $d^{-sn}(f^n)^*R-T_+^s$ is in $U_1$ and the support of  $\Phi$  is in $V_1$.  Therefore, the value of $\lp d^{-sn}(f^n)^*R-T_+^s,\Phi\rp$ does not depend on the value of $\Phi$ outside $U_1\cap V_1$. Thus for the above proposition, the  condition  that $\Phi$ is smooth can be replaced by $\Phi$ being smooth on $U_1\cap V_1$.
\end{remark}

We will use Proposition \ref{prop} to prove the following estimate, which will be crucial in the proof of exponentially mixing for plurisubharmonic observables. The case for $\varphi\in \Cc^2$ was showed  in \cite{dinh-decay-henon}.

\begin{proposition}\label{cor}
	Let $f$ be  a H\'enon-Sibony map with $k=2s$. Let $\varphi$ be a bounded real-valued function on $\P^k$ that is p.s.h.\ on $D$. Let $R$ (resp. $S$) be a positive closed $(s,s)$-current of mass $1$ with support in $U_1$ (resp. $V_1$) and smooth on $\C^k$. Then there exists a constant $c>0$ independent of $\varphi,R$ and $S$  such that \[\big\lp d^{-2sn}(f^n)^*R\wedge(f^n)_*S-\mu,\varphi\big\rp\leq cd^{-n}\|\varphi\|_{L^\infty}  \] for every $n\geq 0$.
\end{proposition}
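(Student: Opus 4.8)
The plan is to reduce the statement to Proposition \ref{prop} by testing the difference $R_n\wedge S_n-\mu$ against well-adapted forms, after first regularizing $\varphi$. Write $R_n:=d^{-sn}(f^n)^*R$ and $S_n:=d^{-sn}(f^n)_*S$; since $f^{-1}(U_i)\Subset U_i$ and $f(V_i)\Subset V_i$, these are positive closed $(s,s)$-currents of mass $1$ supported in $U_1$ and $V_1$ respectively, smooth on $\C^k$, and satisfying $R_n\to T_+^s$ and $S_n\to T_-^s$. Because $R_n\wedge S_n$ and $\mu$ are both probability measures, pairing with a constant gives $0$, so adding a constant to $\varphi$ changes nothing and I may assume $0\le\varphi\le\|\varphi\|_{L^\infty}=:M$. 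First I produce smooth approximants: convolving in the coordinates of $\C^k$ on the relatively compact set $\{G<\delta\}\Subset D$ yields smooth p.s.h.\ functions $\varphi_j\searrow\varphi$ there with $0\le\varphi_j\le M$, which I then globalize by Lemma \ref{lem} to bounded functions on $\P^k$ that are p.s.h.\ on $D$ and still satisfy $\|\varphi_j\|_{L^\infty}\le CM$. By monotone convergence the measures $R_n\wedge S_n$ and $\mu$ integrate $\varphi_j$ to the corresponding integrals of $\varphi$, so it suffices to bound $\langle R_n\wedge S_n-\mu,\varphi_j\rangle$ uniformly in $j$ by $c\,d^{-n}M$.

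Next I decompose, using $\mu=T_+^s\wedge T_-^s$,
\[
\langle R_n\wedge S_n-\mu,\varphi_j\rangle=\langle (R_n-T_+^s)\wedge S_n,\varphi_j\rangle+\langle T_+^s\wedge(S_n-T_-^s),\varphi_j\rangle,
\]
and treat the two terms symmetrically, the second by the $f^{-1}$-version of Proposition \ref{prop} (for which $f^{-1}$ is again H\'enon--Sibony and the roles of $U_i$ and $V_i$, and of $T_+$ and $T_-$, are exchanged). For the first term I rewrite it as $\langle R_n-T_+^s,\Phi\rangle$ where $\Phi$ is a smooth $(s,s)$-form agreeing with $\varphi_j S_n$ on $U_1\cap V_1$; this is legitimate since by Remark \ref{proprmk} the pairing only sees $\Phi$ on $U_1\cap V_1\Subset\{G<\delta\}$. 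As $S_n$ is closed, $\ddc\Phi=\ddc\varphi_j\wedge S_n$ there, which is $\ge 0$ on $U_2$ because $\varphi_j$ is p.s.h.\ on $U_2\cap V_1\subseteq\{G<\delta\}$; Proposition \ref{prop} then gives $\langle (R_n-T_+^s)\wedge S_n,\varphi_j\rangle\le c\,d^{-n}\|\ddc\Phi\|_*$. The second term is handled the same way after replacing the non-smooth current $T_+^s$ by the smooth currents $R_m=d^{-sm}(f^m)^*R$: one bounds $\langle R_m\wedge(S_n-T_-^s),\varphi_j\rangle$ uniformly in $m$ by the $f^{-1}$-analogue of Proposition \ref{prop} with test form $\varphi_j R_m$, and lets $m\to\infty$ using the continuity of $R_m\wedge(S_n-T_-^s)\to T_+^s\wedge(S_n-T_-^s)$ against the continuous function $\varphi_j$.

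It remains to bound $\|\ddc\Phi\|_*$ by $CM$ independently of $n$ and $j$. On $U_2$ the current $\ddc\varphi_j\wedge S_n$ is positive, and its mass over $V_1\cap\{G<\delta\}$ is controlled by $C\,\|\varphi_j\|_{L^\infty}\,\|S_n\|\le CM$ through the Chern--Levine--Nirenberg inequality, using that $\|S_n\|=1$. Since $\Phi$ is compactly supported in $V_1\cap\C^k$, the current $\ddc\Phi$ is $\ddc$-exact; the contributions coming from the cut-off that makes $\Phi$ compactly supported are concentrated off $U_2$, and I estimate $\ddc\Phi$ from below by a positive closed current of controlled mass, so that Lemma \ref{oneside} converts this one-sided bound into the required bound on $\|\ddc\Phi\|_*$. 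Summing the two terms and letting $j\to\infty$ then yields $\langle R_n\wedge S_n-\mu,\varphi\rangle\le c\,d^{-n}M$.

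I expect the main obstacle to be precisely this last step: securing a uniform (in $n$ and in the regularization parameter) Chern--Levine--Nirenberg bound on $\|\ddc\Phi\|_*$ while simultaneously arranging that the test form be smooth, compactly supported in $\C^k$, and $\ddc$-positive on $U_2$. The difficulty is structural rather than computational, since $\varphi$ is p.s.h.\ only on $D$ and the supports $U_1$, $V_1$ reach the hyperplane at infinity at $I_+$ and $I_-$, so the product $\varphi_j S_n$ is neither globally p.s.h.\ nor compactly supported in $\C^k$. Reconciling these features is exactly where the extension Lemma \ref{lem} (to globalize the regularized observable) and Lemma \ref{oneside} (to absorb the non-positive cut-off terms) enter, exploiting throughout that $U_1\cap V_1$ is relatively compact in $D$ because $I_+\cap I_-=\varnothing$, so the pairing never feels the behaviour near infinity.
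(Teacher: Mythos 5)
Your proposal is correct in outline, but it takes a genuinely different route from the paper. The preliminary reductions coincide: the paper also first approximates $\varphi$ by a decreasing sequence of functions smooth on $U_2\cap V_2$ and p.s.h.\ on $D_1$, and also invokes Lemma \ref{lem} to get a compactly supported extension with $\|\varphi\|_{\Cc^2(\P^k\backslash D')}\lesssim\|\varphi\|_{L^\infty}$, hence $\ddc\varphi\geq -c'\|\varphi\|_{L^\infty}\,\omega$ globally. After that, however, the paper never decomposes $d^{-2sn}(f^n)^*R\wedge (f^n)_*S-\mu$ on $\P^k$: it passes to the product map $F(z,w)=\big(f(z),f^{-1}(w)\big)$ on $\P^{2k}$, writes the pairing as $\big\lp d^{-2sn}(F^n)^*(R\otimes S)-T_+^s\otimes T_-^s,\widehat\varphi\,[\Delta]\big\rp$ with $\widehat\varphi(z,w)=\varphi(z)$, regularizes the diagonal current $[\Delta]$ by a smooth positive closed form $\Theta$ (homogeneity of $\P^{2k}$, at the cost of an error $d^{-n}$), extends $R\otimes S$ across $L_\infty'$ by Skoda's theorem, and applies Proposition \ref{prop} once, to $F$, with $\Phi=d^{-2sm}(F^m)_*(\widehat\varphi\Theta)$ for a \emph{fixed} large $m$. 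You instead stay on $\P^k$, split the difference as $(R_n-T_+^s)\wedge S_n+T_+^s\wedge(S_n-T_-^s)$, and apply Proposition \ref{prop} to $f$ with $\Phi=\varphi_j S_n$ and to $f^{-1}$ (legitimate by symmetry: $V_i$ and $T_-$ take the roles of $U_i$ and $T_+$, since $(f^{-1})^n{}^*=(f^n)_*$) with $\Psi=\varphi_j R_m$. What your route buys is elementarity: no product space, no diagonal regularization, no Skoda extension. What it costs is the limiting argument in the second term, forced on you because $T_+^s$ is not smooth.

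Two caveats on that cost. First, letting $m\to\infty$ silently uses the equidistribution $d^{-sm}(f^m)^*R\to T_+^s$, which is neither stated nor proved anywhere in this paper; it is available in the cited literature (any cluster value is a positive closed $(s,s)$-current of mass $1$ supported on $\bigcap_m f^{-m}(\overline U_1)\subset \overline K_+$, and the rigidity theorem of \cite{rigidity}, applicable since $k=2s$, forces it to be $T_+^s$; Bedford--Taylor continuity then gives $T_-^s\wedge R_m\to\mu$ against the continuous $\varphi_j$), but you must import it explicitly -- the paper's product trick deliberately avoids it by keeping $m$ finite and using only the quantitative Proposition \ref{prop}. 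Second, your bound on $\|\ddc\Phi\|_*$ is right but partly for the wrong stated reason: the Chern--Levine--Nirenberg inequality controls the local mass of $\ddc\varphi_j\wedge S_n$ but does not produce the positive \emph{closed} dominating current that the norm $\|\cdot\|_*$ demands. The mechanism that works, and which you do also invoke, is exactly the paper's: Lemma \ref{lem} gives $\ddc\varphi_j\geq -cM\omega$ on all of $\P^k$, hence $\ddc(\varphi_j S_n)\geq -cM\,\omega\wedge S_n$ with $\omega\wedge S_n$ positive, closed, of mass $1$ and supported in $\overline V_1$, and Lemma \ref{oneside} converts this one-sided bound into $\|\ddc(\varphi_j S_n)\|_*\leq 2cM$. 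Note also that since Lemma \ref{lem} already produces a compactly supported extension, no extra cut-off is needed, so your worry about cut-off contributions ``off $U_2$'' dissolves: positivity of $\ddc\Phi$ on all of $U_2$ holds simply because $\supp S_n\cap U_2\subset U_2\cap V_2\Subset\{G<\delta\}$, where $\varphi_j$ is p.s.h., exactly as you observe.
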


Before proving Proposition \ref{cor}, we prove a ``regularization"  lemma for $\varphi$ first. Fix an open set $D_1$ such that $\{G<\delta\}\Subset D_1\Subset D$.

\begin{lemma}\label{lem}
	Let $\varphi$ be a bounded real-valued function on $\P^k$ that is p.s.h.\ on $D_1$. There exist a function $\phi$ with compact support in $\C^k$ and an open set $D'$ satisfying $U_2\cap V_2\Subset D'\Subset D_1$, such that $\phi$ is p.s.h.\ on $D_1$ and smooth outside $D'$ satisfying  $\phi=\varphi$ on $U_2\cap V_2$ and \[\|\phi\|_{L^\infty}\leq c\|\varphi\|_{L^\infty}\quad\text{and}\quad \|\phi\|_{\Cc^2(\P^k\backslash D')}\leq c\|\varphi\|_{L^\infty}\]
	for some constant $c>0$ independent of $\varphi$.
\end{lemma}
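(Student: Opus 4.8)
The plan is to keep $\varphi$ unchanged on a small neighbourhood of $K$ and to glue it, by taking a pointwise maximum with a smooth p.s.h.\ barrier built out of $G$, to a smooth function which can then be cut off to have compact support. Write $M:=\|\varphi\|_{L^\infty}$. Since $\overline{U_2\cap V_2}$ is a compact subset of $\{G<\delta\}$, we have $m_0:=\max_{\overline{U_2\cap V_2}}G<\delta$; first I fix levels $m_0<\delta_1<\delta_2<\delta$ and set $D':=\{G<\delta_2\}$, so that continuity of $G$ gives $U_2\cap V_2\Subset D'\Subset\{G<\delta\}\Subset D_1$, as required. Next I regularize $G$: convolving with a smooth radial approximate identity produces a smooth p.s.h.\ function $\widetilde G$ on a relatively compact neighbourhood $W$ of $\overline{D_1}$ in $\C^k$ with $G\le\widetilde G\le G+\eta$ on $W$, where $\eta>0$ may be taken arbitrarily small (the inequality $\widetilde G\ge G$ is the sub–mean value inequality for p.s.h.\ functions). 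Taking $\eta<\delta_1-m_0$ keeps $\widetilde G<\delta_1$ on $\overline{U_2\cap V_2}$. Finally I define the barrier
\[
h:=A\big(\widetilde G-\delta_1\big)-M,\qquad A:=\frac{3M}{\delta_2-\delta_1},
\]
which is smooth and p.s.h.\ on $W$ since $A>0$ and $\widetilde G$ is so.

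On $D_1$ I set $\phi:=\max(\varphi,h)$, which is p.s.h.\ there as the maximum of two p.s.h.\ functions. On $\overline{U_2\cap V_2}$ one has $\widetilde G<\delta_1$, hence $h<-M\le\varphi$, so $\phi=\varphi$ on $U_2\cap V_2$. On $D_1\setminus D'$ one has $\widetilde G\ge G\ge\delta_2$, hence $h\ge A(\delta_2-\delta_1)-M=2M>M\ge\varphi$, so there $\phi=h$ is smooth and p.s.h.; in particular the possible singularities of $\varphi$ survive only inside $D'$. Since $0\le\widetilde G\le\max_{\overline{D_1}}\widetilde G$ and $A\asymp M$, this also gives $\|\phi\|_{L^\infty(D_1)}\le cM$ with $c$ depending only on the fixed data $K,U_i,V_i,D_1,\widetilde G$.

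It then remains to extend $\phi$ to a compactly supported function on $\C^k$ without disturbing these properties. Because $D'\Subset D_1$, on a full neighbourhood of $\del D_1$ we already have $\phi=h$, a smooth function defined on all of $W$. I choose a cutoff $\chi\in\Cc^\infty_c(\C^k)$ with $0\le\chi\le1$, $\supp\chi\subseteq W$ and $\chi\equiv1$ on a neighbourhood of $\overline{D_1}$, and I set $\phi:=\chi\,h$ on $\C^k\setminus D'$. On the overlap $D_1\setminus D'$ this agrees with $\max(\varphi,h)$ (there $\chi\equiv1$ and $\phi=h$), so $\phi$ is globally well defined, has compact support in $\C^k$, is p.s.h.\ on $D_1$, equals $\varphi$ on $U_2\cap V_2$, and is smooth on $\P^k\setminus D'$ (being $0$ near $L_\infty$ and $\chi h$ elsewhere). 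The estimate $\|\chi h\|_{\Cc^2}\le c\,\|h\|_{\Cc^2(W)}\le c\,A\,\|\widetilde G\|_{\Cc^2(W)}+c(A\delta_1+M)\le cM$ follows since $\|\widetilde G\|_{\Cc^2(W)}$ and $\|\chi\|_{\Cc^2}$ are constants independent of $\varphi$, yielding both asserted bounds.

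The one genuinely delicate point is the calibration $A\asymp M$ of the slope of the barrier $h$: it must be chosen large enough that $h$ strictly dominates $\varphi$ on $D_1\setminus D'$ (so that the maximum is smooth there and all singularities of $\varphi$ are confined to $D'$), while simultaneously the resulting $L^\infty$– and $\Cc^2$–norms are forced to grow only \emph{linearly} in $M$. Everything else—the chain of inclusions between the level sets of $G$, the product-rule estimate for $\chi h$, and the smooth matching across $\del D_1$—is routine verification.
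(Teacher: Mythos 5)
Your proposal is correct and takes essentially the same route as the paper's proof: regularize $G$ by convolution into a smooth p.s.h.\ barrier whose slope is calibrated linearly in $\|\varphi\|_{L^\infty}$, take the maximum with $\varphi$ so that it equals $\varphi$ near $U_2\cap V_2$ and equals the smooth barrier outside $D'$, then multiply by a cutoff to get compact support. The only cosmetic differences are that the paper first normalizes $\varphi\geq 0$ and uses the barrier $\tau=\|\varphi\|_{L^\infty}(G_\lambda-\kappa_1)/(\kappa_2-\kappa_1)$ with $D'$ a sublevel set of the smoothing $G_\lambda$ rather than of $G$ itself.
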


\begin{proof}
	Using regularizations by convolution, one can find  a family of smooth p.s.h.\ functions $G_\ep$ which decreases to $G$ when $\ep$ decreases to $0$. Since $G$ is continuous, this convergence is locally uniformly. Hence there exist positive constants $\kappa_1<\kappa_2$ and $\lambda$ such that $$\{G<\delta\}\Subset \{G_\lambda<\kappa_1\}\Subset\{G_\lambda<\kappa_2\}\Subset  D_1.$$

	Since $\varphi$ is bounded, after adding some constant we can assume $\varphi\geq 0$. Define 
	\[\tau:=\|\varphi\|_{L^\infty}\cdot(G_\lambda-\kappa_1)/(\kappa_2-\kappa_1).\]
	Consider the function $$\phi:=\chi\cdot\max \{\varphi,\tau\},$$ where  $\chi$ is a real cut-off function  satisfying $\chi(z)=0$ for $z\notin D$, $\chi(z)=1$ for $z\in D_1$ and $|\chi'|,|\chi''| $ being bounded by some constant. 
	
	For $z\in D_1$, we have  $\phi=\max \{\varphi,\tau\}$. Hence $\phi$ is p.s.h.\ on $D_1$ because it is equal to the maximum of two p.s.h.\ functions on $D_1$. Now we let $D':= \{G_\lambda<\kappa_2\}$. When $z\in \P^k\backslash D'$, we have $(G_\lambda-\kappa_1)/(\kappa_2-\kappa_1)\geq 1$. In this case, $\phi=\chi\tau$, so $\phi$ is smooth outside $D'$. When $z\in\{G_\lambda<\kappa_1\}$, we have $\tau(z)\leq0\leq \varphi(z)$, so $\phi=\varphi$ inside $\{G_\lambda<\kappa_1\}$. Since $U_2\cap V_2\Subset \{G_\lambda<\kappa_1\}$, we get $\phi=\varphi$ on $U_2\cap V_2$.

	Now we prove the two estimates. For the first inequality, \[\|\phi\|_{L^\infty}=\sup_{z\in \P^k\backslash D'}\chi(z)\tau(z)= \sup_{z\in D\backslash D'} \tau(z)\leq c_1\|\varphi\|_{L^\infty}\] for some constant $c_1>0$ independent of $\varphi$. 
	For the second one, \[\|\phi\|_{\Cc^2(\P^k\backslash D')}=\|\chi\tau\|_{\Cc^2(\P^k\backslash D')} \leq c_2 \|\varphi\|_{L^\infty}\] for some constant $c_2>0$ independent of $\varphi$.
	We take 
	$c=\max\{c_1,c_2\}$ and finish the proof of this lemma.
\end{proof}

Now consider the canonical inclusions of $\C^k$ and $\C^k\times\C^k$ in $\P^k$ and $\P^{2k}$. We will use $z,w$ and $(z,w)$ for the canonical coordinates of $\C^k$ and $\C^k\times \C^k$. Write  $[z:t],[w:t]$ and $[z:w:t]$ for the homogeneous coordinates. Denote by $L_\infty'$  the hyperplane at infinity of $\P^{2k}$.

Define an automorphism of  $\C^k\times \C^k$ by $F(z,w):=\big(f(z),f^{-1}(w)\big)$.
Then $F$ is also a H\'enon-Sibony map (see \cite[Lemma 3.2]{dinh-decay-henon}). The algebraic degrees of $F$ and $F^{-1}$ are both equal to $d$.  The Green current  of bidegree $(2s,2s)$ of $F$ is $T_+^s\otimes T_-^s$ satisfying
$F^*(T_+^s\otimes T_-^s)=d^{2s}T_+^s\otimes T_-^s$.

Denote by $I_\pm^F$  the indeterminacy sets of $F$. Let $\Delta$ be the diagonal of $\C^k\times\C^k$ and let $\overline\Delta$ be its closure in $\P^{2k}$. From \cite[Lemma 3.2]{dinh-decay-henon}, we know that 
$$I_+^F=\big\{[z:w:0]:\,[z:0]\in I_+\text{ and } [w:0]\in I_-\big\},$$  
$$I_-^F=\big\{[z:w:0]:\,[z:0]\in I_-\text{ and }[w:0]\in I_+\big\},$$
and
$$I_\pm^F\cap \overline\Delta=\varnothing,\quad  F(\overline\Delta)\cap L_\infty'\subset I_-^F.$$

We will use $F$ to prove  Proposition \ref{cor} by following the same strategy as Proposition 3.1 in \cite{dinh-decay-henon}. We also need that every positive current on $\P^k$ can be regularized on every neighborhood of its support since $\P^k$ is homogeneous (see e.g.\ \cite{dinh-sibony:acta}). For the convenience of the reader, we present full details here although some parts of the proof have appeared in \cite{dinh-decay-henon} already.

\begin{proof}[Proof of Proposition \ref{cor}]
	
	Since the support of the measure $d^{-2sn}(f^n)^*R\wedge(f^n)_*S-\mu$ is in $U_1\cap V_1$ and the constant $c$ in Proposition \ref{cor} is independent of $\varphi,R$ and $S$, we can assume $\varphi$ is smooth on $U_2\cap V_2$ and p.s.h.\ on $D_1$ in order to apply Proposition \ref{prop}. Then we obtain the general case by approximating $\varphi$ by a decreasing sequence of smooth functions which are p.s.h.\ on $D_1$. 
	
	On the other hand,  using Lemma \ref{lem}, we can assume $\varphi$ is smooth on $(\P^k\backslash D')\cup (U_2\cap V_2)$, with compact support in $\C^k$ and $\|\varphi\|_{\Cc^2(\P^k\backslash D')}\leq c'\|\varphi\|_{L^\infty}$ for some constant $c'>0$. After multiplying $\varphi$ by  some constant, we can assume $|\varphi|\leq1$.\medskip

	Replacing $R$ and $S$ by $d^{-s}f^*(R)$ and $d^{-s}f_*(S)$, we can also assume \[\supp(R)\cap L_\infty\subset I_+ \quad\text{and}\quad \supp(S)\cap L_\infty\subset I_-.\]  
	Consider the current $R\otimes S$ in $\C^k\times\C^k$. By the above assumptions on $R$ and $S$, we have $$\overline{\supp(R\otimes S)}\cap L_\infty'\subset I_+^F.$$
	Since $\dim I_+^F=2s-1$, by Skoda's extension Theorem \cite[Theorem III.2.3]{demailly:agbook}, the trivial extension of $R\otimes S$ (which we still denote by $R\otimes S$) to $\P^{2k}$ is a positive closed $(2s,2s)$-current of mass $1$ and satisfies \[\supp(R\otimes S)\cap L_\infty'\subset I_+^F.\]
	
	Define $\widehat\varphi(z,w):=\varphi(z)$ on $\C^k\times \C^k$. Since $T_\pm$ are invariant and have continuous potentials out of $I_\pm$, we have 
	\[\big\lp d^{-2sn}(f^n)^*R\wedge (f^n)_*S-\mu,\varphi\big\rp=\big\lp d^{-2sn}(f^n)^*R\otimes (f^n)_*S-T_+^s\otimes T_-^s, \widehat\varphi [\Delta]\big\rp.\]
 Since $\P^k$ is homogeneous,	using a regularization of $[\Delta]$, one can find a smooth positive closed form $\Theta$ of mass $1$ with support in a small neighborhood $W$ of $\overline\Delta$ such that 
	\begin{align*}\big|\big\lp d^{-2sn}(f^n)^*R\otimes (f^n)_*S-T_+^s&\otimes T_-^s, \widehat\varphi[\Delta]\big\rp \\ &-\big\lp d^{-2sn}(f^n)^*R\otimes (f^n)_*S-T_+^s\otimes T_-^s, \widehat\varphi\Theta\big\rp \big|\leq d^{-n}.
	\end{align*}
	Note that $\Theta$ may depend on $n$. We can choose $W$ such that $W\cap I_+^F=\varnothing$.
	
	In the following, we will estimate the term $$\big\lp d^{-2sn}(f^n)^*R\otimes (f^n)_*S-T_+^s\otimes T_-^s, \widehat\varphi\Theta\big\rp.$$
	
	Fix an integer $m>0$ large enough. Since $\widehat\varphi\Theta$ has compact support in $\C^k\times \C^k$ and $$(f^n)^*R\otimes (f^n)_*S=(F^n)^*(R\otimes S)$$ in $\C^k\times\C^k$, we have for $n>m$,
	\begin{align}
		\big\lp d^{-2sn}(f^n)^*R&\otimes (f^n)_*S-T_+^s\otimes T_-^s, \widehat\varphi\Theta\big\rp \nonumber\\ 
		&=\big\lp d^{-2sn} (F^n)^*(R\otimes S) -d^{-2sm} (F^m)^*(T_+^s\otimes T_-^s), \widehat\varphi \Theta\big\rp \nonumber\\
		&=\big\lp d^{-2s(n-m)}(F^{n-m})^*(R\otimes S)- T_+^s\otimes T_-^s, d^{-2sm}(F^m)_*(\widehat\varphi\Theta)\big\rp\nonumber\\
		&=\big\lp d^{-2s(n-2m)} (F^{n-2m})^*T -T_+^s\otimes T_-^s, \Phi\big\rp, \nonumber
	\end{align}
	where $T:=d^{-2sm}(F^m)^*(R\otimes S)$ and $\Phi:=d^{-2sm}(F^m)_*(\widehat\varphi\Theta)$.
	
	Note that for $m,n$ big enough, $T$ has support in a small neighborhood $U$ of $K_+^F:=K_+\times K_-$ and $\Phi$  has support in a small neighborhood $V$ of $K_-^F:=K_-\times K_+$. Since $m$ is large and $\varphi$ is  smooth p.s.h.\ on $U_2\cap V_2$, there exists a neighborhood $U'\Supset U$ such that on $U'$, $\ddc \Phi \geq 0$ and $\Phi$ is smooth.  
	
	Define $\widehat\omega(z,w):=\omega(z)$. Since $\|\varphi\|_{\Cc^2(\P^k\backslash D')}\leq c'$ and $\varphi$ is p.s.h.\  on $D_1$, we have $\ddc\varphi\geq -c'\omega$. It follows that 
	\begin{equation}\label{dinh-diff}
	\ddc\Phi\geq -d^{-2sm}(F^m)_*(c'\widehat\omega\wedge\Theta).
	\end{equation}
	Using Lemma \ref{oneside}, we obtain $\|\ddc\Phi\|_*\leq 2c'$  because the operator $d^{-2sm}(F^m)_*$ preserves the mass of positive closed $(k,k)$-currents and $c'\widehat\omega\wedge\Theta$ has mass $c'$.
	
	Notice that the choices of $W,U,V,U'$ and $m$ do not depend on $\varphi$ and $n$. Proposition \ref{prop} and Remark \ref{proprmk} applied to $F,T$ and $\Phi$ implies that there exists $c>0$ such that
	\[\big\lp  d^{-2s(n-2m)} (F^{n-2m})^*T -T_+^s\otimes T_-^s, \Phi\big\rp \leq cd^{-n}\] for all $n$.
    The proof of the proposition is complete.
\end{proof}

\begin{remark}\rm
	The main difference between the proof of Proposition \ref{cor} and   \cite[Proposition 3.1]{dinh-decay-henon} is the term $\ddc \Phi$ in \eqref{dinh-diff}. Here we only obtain a lower bound for it. While in \cite[Proposition 3.1]{dinh-decay-henon}, there is also an upper bounded due to the assumption that $\varphi\in\Cc^2$. So there is  an extra lower bounded in the conclusion of \cite[Proposition 3.1]{dinh-decay-henon}.
\end{remark}

\section{Proofs of the main theorems}
\begin{proof}[Proof of Theorem \ref{main}]\phantom{\qedhere}	
	By Remark \ref{remarkbr}, we can assume $\varphi$ and $\psi$ are bounded on $\C^k$ and $\|\varphi\|_{L^\infty}=\|\varphi\|_{L^\infty(D)},\|\psi\|_{L^\infty}=\|\psi\|_{L^\infty(D)}$. After  multiplying them by some constant one can assume $\|\varphi\|_{L^\infty}\leq1/2$ and  $\|\psi\|_{L^\infty}\leq 1/2$.
	
	It is sufficient to prove Theorem \ref{main} for $n$ even because applying it to $\varphi$ and $\psi\circ f$ gives the case of odd $n$ (we reduce the domain $D$ if necessary). Using the invariance of  $\mu$, it is enough to show that 
	\begin{equation}\label{4}
		\big|\big\lp \mu,(\varphi\circ f^n)(\psi\circ f^{-n})\big\rp-\lp\mu,\varphi\rp\lp\mu,\psi\rp\big|\leq cd^{-n}
	\end{equation} for some $c>0$. It is equivalent to prove \[\big\lp \mu,(\varphi\circ f^n)(\psi\circ f^{-n})\big\rp-\lp\mu,\varphi\rp\lp\mu,\psi\rp\leq cd^{-n}\] and \[\big\lp \mu,(\varphi\circ f^n)(-\psi\circ f^{-n})\big\rp-\lp\mu,\varphi\rp\lp\mu,-\psi\rp\leq cd^{-n}.\]
	
   \medskip
	For  $j=1,2$, we define $$\varphi_j^+:=\varphi^2+j\varphi+6,\quad \varphi_j^-:=\varphi^2+j\varphi-6,\quad \psi_j^+:=\psi^2+j\psi+6,\quad \psi_j^-:=-\psi^2-j\psi+6.$$ 
	Consider the following eight functions on $\C^k\times\C^k$:
	$$\Phi_{jl}^+(z,w):=\varphi_j^+(z)\psi_l^+(w),\quad \Phi_{jl}^-(z,w):=\varphi_j^-(z)\psi_l^-(w),$$ where $j,l=1,2$. We prove two lemmas first.
\end{proof}

\begin{lemma}\label{3.1}
	 The functions $\Phi_{jl}^\pm$ are all p.s.h.\ on $D\times D$.
\end{lemma}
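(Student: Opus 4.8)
The plan is to reduce to the smooth case by regularization and then verify plurisubharmonicity by a direct computation of the complex Hessian of these separate-variable products. First I would fix $D'\Subset D$ and replace $\varphi,\psi$ by convolutions $\varphi_\ep,\psi_\ep$ with a smooth probability kernel: these are smooth, p.s.h., decrease to $\varphi,\psi$ as $\ep\to0$, and still satisfy $|\varphi_\ep|,|\psi_\ep|\leq 1/2$, since convolution against a probability measure preserves the $L^\infty$-bound. As a bounded decreasing limit of p.s.h.\ functions is p.s.h., it suffices to prove the lemma for the smooth approximants on $D'\times D'$ and let $\ep\to0$. One checks that $\Phi^+_{jl}$ is the decreasing limit of the corresponding products, and $\Phi^-_{jl}$ is as well: here $\varphi^-_{j,\ep}$ decreases while staying negative and $\psi^-_{l,\ep}$ increases while staying positive, so their product decreases.

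Next I would record the elementary sign facts. With $g(t)=t^2+jt+6$ one has $g'(t)=2t+j\geq0$ on $[-1/2,1/2]$ for $j\in\{1,2\}$, so $g$ is convex and increasing there; hence $\varphi^+_j=g(\varphi)$ and $\psi^+_l$ are p.s.h., $\varphi^-_j=\varphi^+_j-12$ is p.s.h., and $\psi^-_l=12-\psi^+_l$ is plurisuperharmonic. The bound $|\varphi|,|\psi|\leq1/2$ yields the ranges $\varphi^+_j,\psi^+_l\in[5,29/4]$, $\psi^-_l\in[19/4,7]$, $\varphi^-_j\in[-7,-19/4]$, and $(2\varphi+j)^2,(2\psi+l)^2\leq9$. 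For the smooth factors I then use the separate-variable formula
\[\ddc\big(u(z)v(w)\big)=v\,\ddc u+u\,\ddc v+du\wedge\dc v+dv\wedge\dc u.\]
Testing the resulting Levi form against a tangent vector $\xi=(\xi',\xi'')$ and writing $X_0=\del\varphi(\xi')$, $Y_0=\del\psi(\xi'')$, $p=2\varphi+j$, $q=2\psi+l$, and using $\del\varphi^+_j=p\,\del\varphi$, $\ddc\varphi^+_j=p\,\ddc\varphi+2\,d\varphi\wedge\dc\varphi$ together with the analogues for $\psi$, the Hessian of $\Phi^+_{jl}$ at $\xi$ takes the form
\[vp\,L_\varphi(\xi')+uq\,L_\psi(\xi'')+2v|X_0|^2+2u|Y_0|^2+2pq\,\Re(X_0\overline{Y_0}),\]
where $u=\varphi^+_j\geq5$, $v=\psi^+_l\geq5$, and $L_\varphi,L_\psi\geq0$ denote the Levi forms of $\varphi,\psi$. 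The first two terms are nonnegative, and the remaining quadratic form in $(|X_0|,|Y_0|)$ is positive semidefinite exactly when $4uv\geq(pq)^2$, which holds since $4uv\geq100>81\geq(pq)^2$.

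The computation for $\Phi^-_{jl}$ is identical once the signs are bookkept: now $v=\psi^-_l>0$ has Levi form $\leq0$ and $u=\varphi^-_j<0$, so both diagonal contributions $v\,\ddc\varphi^-_j$ and $u\,\ddc\psi^-_l$ are again nonnegative, and the cross term leads to the condition $4|u|v\geq(pq)^2$, which holds because $|u|,v\geq19/4$ give $4|u|v\geq 361/4>81$. Passing to the limit over $\ep$ then finishes the proof.

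The main obstacle is precisely the mixed $z$--$w$ term $2pq\,\Re(X_0\overline{Y_0})$, which carries no sign and must be absorbed by the (nonnegative) diagonal terms; the entire role of the constant $6$ and of restricting to $j,l\in\{1,2\}$ with $|\varphi|,|\psi|\leq1/2$ is to force the factors $u,v$ to dominate the gradient coefficients $p,q$ so that the discriminant inequality $4|u|v\geq(pq)^2$ holds with margin. All distributional subtleties of the Hessian identities are avoided by the initial regularization step, so the argument is rigorous once the smooth case is settled.
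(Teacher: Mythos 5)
Your proof is correct and follows essentially the same route as the paper: a direct computation of the complex Hessian of the separate-variable products, with the mixed $z$--$w$ term absorbed by the diagonal gradient terms thanks to the margin created by the constant $6$ and the bounds $\|\varphi\|_{L^\infty},\|\psi\|_{L^\infty}\leq 1/2$ (your discriminant condition $4uv\geq(pq)^2$, via $100>81$ and $361/4>81$, is the same estimate the paper encodes as $10>9$ and $9.5>9$ using the identities $i\partial(\varphi\pm\psi)\wedge\dbar(\varphi\pm\psi)\geq 0$). Your initial regularization step, including the monotonicity bookkeeping for $\Phi^-_{jl}$, is a careful justification of the distributional computation that the paper carries out formally, but it does not change the substance of the argument.
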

	
\begin{proof}
	  By a direct computation,
	\begin{align}
		i\partial\dbar \Phi_{jl}^+&=i\partial\dbar (\varphi^2+j\varphi+6)(\psi^2+l\psi+6) \nonumber\\
		&=(\psi^2+l\psi+6)i\partial\dbar(\varphi^2+j\varphi+6)+i\partial (\varphi^2+j\varphi+6)\wedge \dbar(\psi^2+l\psi+6) \nonumber\\
		&\quad\,\, +i\partial (\psi^2+l\psi+6)\wedge\dbar (\varphi^2+j\varphi+6)+(\varphi^2+j\varphi+6)i\partial\dbar(\psi^2+l\psi+6)\nonumber\\
		&=(\psi^2+l\psi+6)\big((2\varphi+j)i\partial\dbar\varphi+2i\partial\varphi\wedge\dbar\varphi\big)+ (2\varphi+j)(2\psi+l)i\partial\varphi\wedge\dbar\psi  \nonumber\\
		&\quad\,\, +(2\varphi+j)(2\psi+l)i\partial\psi\wedge\dbar\varphi+(\varphi^2+j\varphi+6)\big((2\psi+l)i\partial\dbar\psi+2i\partial\psi\wedge\dbar\psi\big).\nonumber
	\end{align}
	
Recall our assumption $\|\varphi\|_{L^\infty}\leq1/2,\|\psi\|_{L^\infty}\leq 1/2$, so we have $2\varphi+j\geq 0, 2\psi+l\geq 0$. Since  $i\partial\dbar\varphi,i\partial\varphi\wedge\dbar\varphi,i\partial\dbar\psi,i\partial\psi\wedge\dbar\psi$ are all positive, we get 
\begin{align*}
i\partial\dbar \Phi_{jl}^+&\geq 10i\partial\varphi\wedge\dbar\varphi+10i\partial\psi\wedge\dbar\psi  +(2\varphi+j)(2\psi+l)(i\partial\varphi\wedge\dbar\psi +i\partial\psi\wedge\dbar\varphi)\\
&\geq 10i\partial\varphi\wedge\dbar\varphi+10i\partial\psi\wedge\dbar\psi  -9(i\partial\varphi\wedge\dbar\varphi+i\partial\psi\wedge\dbar\psi)\geq 0.
\end{align*}
	The second inequality holds because \[i\partial\varphi\wedge\dbar\varphi+i\partial\varphi\wedge\dbar\psi+i\partial\psi\wedge\dbar\varphi+i\partial\psi\wedge\dbar\psi=i\partial(\varphi+\psi)\wedge\dbar(\varphi+\psi)\geq 0.\] 
	
Similarly, by using \[i\partial\varphi\wedge\dbar\varphi-i\partial\varphi\wedge\dbar\psi-i\partial\psi\wedge\dbar\varphi+i\partial\psi\wedge\dbar\psi=i\partial(\varphi-\psi)\wedge\dbar(\varphi-\psi)\geq 0,\] we obtain
\[i\partial\dbar \Phi_{jl}^-\geq 9i\partial\varphi\wedge\dbar\varphi+9i\partial\psi\wedge\dbar\psi  -9(i\partial\varphi\wedge\dbar\varphi+i\partial\psi\wedge\dbar\psi)\geq 0.\]
The proof of this lemma is finished.
\end{proof}

\begin{lemma}\label{3.2}
 There exists a constant $c>0$ such that 
 $$\big\lp \mu,(\varphi_j^+\circ f^n)(\psi_l^+\circ f^{-n})\big\rp-\lp\mu,\varphi_j^+\rp\lp\mu,\psi_l^+\rp\leq cd^{-n}$$ 
 and 
 $$\big\lp \mu,(\varphi_j^-\circ f^n)(\psi_l^-\circ f^{-n})\big\rp-\lp\mu,\varphi_j^-\rp\lp\mu,\psi_l^-\rp\leq cd^{-n}$$ for all $j,l$ and $n$.
\end{lemma}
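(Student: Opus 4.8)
The plan is to deduce both inequalities from Proposition~\ref{cor} applied to the product map $F(z,w)=\big(f(z),f^{-1}(w)\big)$. Recall that $F$ is again a H\'enon--Sibony map, of dimension $2k=2(2s)$ and parameter $2s$, of algebraic degree $d$, with Green currents of bidegree $(2s,2s)$ equal to $T_+^s\otimes T_-^s$ and $T_-^s\otimes T_+^s$, with equilibrium measure $\mu^F=\mu\otimes\mu$, and with $K^F:=K_+^F\cap K_-^F=K\times K$. The essential input is Lemma~\ref{3.1}: each $\Phi_{jl}^{\pm}$ is bounded and p.s.h.\ on $D\times D$, a neighborhood of $K\times K$, so it is an admissible observable for Proposition~\ref{cor} applied to $F$. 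I will carry out the argument for $\Phi_{jl}^{+}$; the case of $\Phi_{jl}^{-}$ is word for word the same, using the second half of Lemma~\ref{3.1}, and since there are only finitely many $j,l$ the constant can be chosen uniform.

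First I would re-express both quantities through $F$. Writing $\iota(z)=(z,z)$ for the diagonal embedding and using that $T_{\pm}$ have continuous potentials off $I_{\pm}$, one has $\iota_*\mu=(T_+^s\otimes T_-^s)\wedge[\Delta]$; since $\Phi_{jl}^{+}\circ F^n(z,z)=(\varphi_j^{+}\circ f^n)(z)\,(\psi_l^{+}\circ f^{-n})(z)$,
\[\big\lp\mu,(\varphi_j^{+}\circ f^n)(\psi_l^{+}\circ f^{-n})\big\rp=\big\lp(T_+^s\otimes T_-^s)\wedge[\Delta],\,\Phi_{jl}^{+}\circ F^n\big\rp.\]
Transporting $F^n$ onto the currents by the invariance $F^*(T_+^s\otimes T_-^s)=d^{2s}(T_+^s\otimes T_-^s)$ rewrites the right-hand side as $\big\lp(T_+^s\otimes T_-^s)\wedge d^{-2sn}(F^n)_*[\Delta],\Phi_{jl}^{+}\big\rp$; here $\overline\Delta\cap I_{\pm}^F=\varnothing$ guarantees that the iterates $(F^n)_*[\Delta]$ are well defined and equidistribute. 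On the other hand $\lp\mu,\varphi_j^{+}\rp\lp\mu,\psi_l^{+}\rp=\lp\mu\otimes\mu,\Phi_{jl}^{+}\rp=\lp(T_+^s\otimes T_-^s)\wedge(T_-^s\otimes T_+^s),\Phi_{jl}^{+}\rp$. Because $d^{-2sn}(F^n)_*[\Delta]\to T_-^s\otimes T_+^s$ (equidistribution of the pushforwards towards the Green current), the difference of the two quantities is exactly
\[\big\lp(T_+^s\otimes T_-^s)\wedge\big(d^{-2sn}(F^n)_*[\Delta]-T_-^s\otimes T_+^s\big),\Phi_{jl}^{+}\big\rp,\]
whose rate of decay is precisely what Proposition~\ref{cor} for $F$ controls.

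To fit the hypotheses of Proposition~\ref{cor} I would take $R^F=T_+^s\otimes T_-^s$, a fixed point of $d^{-2sn}(F^n)^*$ (smooth currents approximating it from above may be used, as the constant in Proposition~\ref{cor} does not depend on the currents), and build $S^F$ from the diagonal. Since $\P^{2k}$ is homogeneous, regularise $[\Delta]$ to a smooth positive closed current $\Theta$ in its cohomology class supported near $\overline\Delta$; as $\overline\Delta$ is a linear subspace of $\P^{2k}$ this class has mass $1$. Because pushforward by $F$ attracts towards $K_-^F=K_-\times K_+$, for a fixed large $m_0$ the current $S^F:=d^{-2sm_0}(F^{m_0})_*\Theta$ is smooth, of mass $1$, and supported in $V_1^F$, hence admissible, while $d^{-2sn}(F^n)_*S^F=d^{-2s(n+m_0)}(F^{n+m_0})_*\Theta$ follows the normalised pushforward of the diagonal up to the fixed shift $m_0$. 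Proposition~\ref{cor} applied to $F$ then gives $\lp d^{-4sn}(F^n)^*R^F\wedge(F^n)_*S^F-\mu^F,\Phi_{jl}^{+}\rp\le c\,d^{-n}\|\Phi_{jl}^{+}\|_{L^\infty}$, and $\|\Phi_{jl}^{+}\|_{L^\infty}$ is bounded by an absolute constant since $|\varphi|,|\psi|\le 1/2$; identifying the first pairing with the correlation at time $n+m_0$ and absorbing the fixed shift into the constant yields the claim for all $n$. The hard part, and the only place where genuine work is required beyond quoting Proposition~\ref{cor}, is the passage from $[\Delta]$ to an admissible $S^F$: one must (i) control the regularisation error $\big\lp(T_+^s\otimes T_-^s)\wedge d^{-2s(n+m_0)}(F^{n+m_0})_*([\Delta]-\Theta),\Phi_{jl}^{+}\big\rp=O(d^{-n})$, using the $\ddc$-exactness of $[\Delta]-\Theta$, the boundedness of $\Phi_{jl}^{+}$, and the mass contraction under the normalised pushforward, exactly as in the proof of Proposition~\ref{cor}; and (ii) localise the diagonal into $V_1^F$ by the fixed iterate $F^{m_0}$.
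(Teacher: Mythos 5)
Your proposal is correct and follows essentially the same route as the paper: pass to the product map $F$, use Lemma \ref{3.1} to make $\Phi_{jl}^{\pm}$ admissible observables, regularise the invariant current and the (localised, fixed-iterate pushforward of the) diagonal into smooth mass-one currents supported near $K_+^F$ and $K_-^F$, and apply Proposition \ref{cor} to $F$ with $\mu\otimes\mu$, absorbing the bounded regularisation error and the fixed shift into the constant. The only difference is bookkeeping: the paper distributes the iterates symmetrically as $d^{-4sn+4sm}(F^{n-m})^*T_F'\wedge (F^{n-m})_*T_m'$, while you keep the pull-back side anchored at (an approximation of) the invariant current $T_F$ and read off the correlation at time $n+m_0$ — a cosmetic reshuffling of the same argument.
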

	
\begin{proof}	
	Without loss of generality, we only show the first inequality. 
	Define $T_F:=T_+^s\otimes  T_-^s$. Using $F^*(T_F)=d^{2s}T_F$ and that $T_\pm$ have continuous potentials in $\C^k$, we get 	
	\begin{align*}
		\big\lp \mu,(\varphi_j^+\circ f^n)(\psi_l^+\circ f^{-n})\big\rp
		&=\big\lp T_+^s\wedge T_-^s,(\varphi_j^+\circ f^n)(\psi_l^+\circ f^{-n})\big\rp \\
		&=\big\lp T_F\wedge [\Delta],\Phi_{jl}^+\circ F^n\big\rp \\
		&=\big\lp d^{-4sn+2sm}(F^{2n-m})^*T_F\wedge[\Delta],\Phi_{jl}^+\circ F^n\big\rp \\
		&=\big\lp d^{-4sn+2sm}(F^{n-m})^*T_F\wedge(F^n)_*[\Delta],\Phi_{jl}^+\big\rp \\
		&=\big\lp d^{-4sn+4sm}(F^{n-m})^*T_F\wedge(F^{n-m})_*T_m,\Phi_{jl}^+\big\rp, 
	\end{align*}
	where $T_m:=d^{-2sm}(F^m)_*[\Delta]$ and $m$ is a fixed and  sufficiently large integer. 
	
 Since $\P^k$ is homogeneous,	using  regularizations again, one can find two smooth currents $T_F'$ and  $T_m'$ of mass 1 with support in small neighborhoods $U$ of  $K_+^F=K_+\times K_-$ and  $V$ of $K_-^F=K_-\times K_+$ respectively, such that 
 \begin{align}
		\big\lp d^{-4sn+4sm}(F^{n-m})^*T_F&\wedge(F^{n-m})_*T_m,\Phi_{jl}^+\big\rp \nonumber\\
		&-\big\lp d^{-4sn+4sm}(F^{n-m})^*T_F'\wedge(F^{n-m})_*T_m',\Phi_{jl}^+\big\rp \leq d^{-n}. \label{tensor1}
	\end{align}
	The sets $U$ and $V$ satisfy $U\cap V\Subset D\times D$ and they only depend  on  $f$. The choice of $m$ depends on $f$ as well. The currents $T_F'$ and $T_m'$ may depend on $n$. 
	
Thus we can 
	apply Proposition \ref{cor} to $F,\mu\otimes\mu$ and $\Phi_{jl}^+$ instead of $f,\mu$ and $\varphi$ to get that for some constant $c>0$, \begin{equation}\label{tensor2}
		\big\lp d^{-4sn+4sm}(F^{n-m})^*T_F'\wedge(F^{n-m})_*T_m'-\mu\otimes\mu,\Phi_{jl}^+\big\rp\leq cd^{-n}
	\end{equation} for all $n$. We can choose  $c$ independent of $\varphi$ and $\psi$ because  $\|\Phi_{jl}^+\|_{L^\infty}$'s are bounded by some constant independent of $\varphi$ and $\psi$. 
	
	Since  $\lp\mu\otimes \mu,\Phi_{jl}^+\rp=\lp\mu,\varphi_j^+\rp\lp\mu,\psi_l^+\rp$, combining  \eqref{tensor1} and \eqref{tensor2} gives
	$$\big\lp \mu,(\varphi_j^+\circ f^n)(\psi_l^+\circ f^{-n})\big\rp-\lp\mu,\varphi_j^+\rp\lp\mu,\psi_l^+\rp\leq (c+1)d^{-n}$$
 for all $n$. This finishes the proof of this lemma.
	\end{proof}
	
	Now we can finish the proof of Theorem \ref{main}.
	
\begin{proof}[End of the proof of Theorem \ref{main}]	
	 Now consider $\alpha_{11}^+=2, \alpha_{22}^+=\alpha_{11}^-=\alpha_{21}^-=\alpha_{12}^-=1$ and $\alpha_{21}^+=\alpha_{12}^+=\alpha_{22}^-=0$. A direct computation gives 
	 	 \begin{align*}
	\mathcal A:&= \sum_{j,l=1,2} \Big(\alpha_{jl}^+(\varphi_j^+\circ f^n)(\psi_l^+\circ f^{-n})+\alpha_{jl}^-(\varphi_j^-\circ f^n)(\psi_l^-\circ f^{-n})\Big) \\
	 &=(\varphi\circ f^n)(\psi\circ f^{-n})+36\,\varphi^2\circ f^n+36\,\psi^2\circ f^{-n}+48\,\varphi\circ f^n+48\,\psi\circ f^{-n} ,
	 \end{align*}
	 and
	 \begin{align*}
	 \mathcal B:&=\sum_{j,l=1,2} \Big(\alpha_{jl}^+\lp\mu,\varphi_j^+\rp\lp\mu,\psi_l^+\rp +\alpha_{jl}^- \lp\mu,\varphi_j^-\rp\lp\mu,\psi_l^-\rp \Big)\\
	 &=\lp\mu,\varphi\rp\lp\mu,\psi\rp  +36 \lp \mu,\varphi^2\rp + 36\lp\mu,\psi^2\rp +48 \lp \mu,\varphi\rp +48  \lp \mu,\psi\rp.
	 \end{align*}
   The invariance of $\mu$ implies that
		$$\lp \mu,\varphi^m\circ f^{\pm n}\rp=\lp\mu,\varphi^m\rp\quad\text{and}\quad \lp \mu,\psi^m\circ f^{\pm n}\rp=\lp\mu,\psi^m\rp.$$
	Therefore, 
	$$\lp \mu, \mathcal A\rp -\mathcal B = \big\lp \mu,(\varphi\circ f^n)(\psi\circ f^{-n})\big\rp-\lp\mu,\varphi\rp\lp\mu,\psi\rp . $$
	
	Finally, by applying Lemma \ref{3.2}, since $\alpha_{jl}^\pm$ are all non-negative, we deduce that 
	\[\lp \mu, \mathcal A\rp -\mathcal B\leq\Big(\sum_{j,l=1,2}\big(\alpha_{jl}^++\alpha_{jl}^-\big)\Big)cd^{-n}=6cd^{-n}\]
    for the constant $c$ in Lemma \ref{3.2}.
	\medskip
	
	Similarly, taking $\beta_{11}^-=2, \beta_{11}^+=\beta_{21}^+=\beta_{12}^+=\beta_{22}^-=1$ and $\beta_{22}^+=\beta_{21}^-=\beta_{12}^-=0$, and repeating the above computation, we can obtain  \[\big\lp \mu,(\varphi\circ f^n)(-\psi\circ f^{-n})\big\rp-\lp\mu,\varphi\rp\lp\mu,-\psi\rp\leq \Big(\sum_{j,l=1,2}\big(\beta_{jl}^++\beta_{jl}^-\big)\Big)cd^{-n}=6cd^{-n}.\]
	The above two inequalities prove  inequality \eqref{4} and finish the proof of the main theorem. 
\end{proof}

\begin{remark}\rm \label{3.3}
	For the case $k\neq 2s$, one needs to work in the compactification $\P^k\times \P^k$ of $\C^{2k}$ (see also \cite{vigny-decay}). Similarly estimates can be obtained. However, it does not improve  this paper too much, so we choose not to present here.
\end{remark}

By combining Theorem \ref{main} and the moderate property of $\mu$, we can prove the second main theorem of this paper.

\begin{proof}[Proof of Theorem \ref{maincor}]
	We can assume $\varphi$ and $\psi$ are p.s.h.\ and negative on $D$ because constant functions  satisfy Theorem \ref{maincor} obviously.  After multiplying them by some constant we can also assume $\big\lp\mu,|\varphi|\big\rp \leq 1$ and $\big\lp\mu,|\psi|\big\rp \leq 1$. Let $M>0$ be a constant whose value will be specified later.  Define \[\varphi_1:=\max\{\varphi,-M\}, \quad \psi_1:=\max\{\psi,-M\},\]
	and \[\varphi_2:=\varphi-\varphi_1,\quad \psi_2:=\psi-\psi_1.\]
	
	Then $\varphi_1$ and $\psi_1$ are bounded and p.s.h.\ on $D$. Since $\mu$ is moderate and clearly $\{\varphi,\psi\}$ is a compact family of d.s.h.\ functions, by \eqref{mod},  there exist constants $c>0$ and $\alpha>0$ such that \[\mu\{|\varphi|>M'\}\leq ce^{-\alpha M'} \quad\text{and}\quad \mu\{|\psi|>M'\}\leq ce^{-\alpha M'}.\]
	
	For $t\in\N$,  we compute the integral
	\[\int_{|\varphi|>t}|\varphi|d\mu\leq \sum_{k=t}^\infty (k+1)\mu\{|\varphi|>k\}\leq \sum_{k=t}^\infty c(k+1)e^{-\alpha k}.\] 
	Note that for $M'\geq1$,  $([M']+1)e^{-\alpha [M']}\lesssim e^{-\alpha M'/2}$, where the symbol $\lesssim$ stands for an inequality up to a multiplicative constant. Thus we have \[\int_{|\varphi|>M}|\varphi|d\mu\lesssim \sum_{k=[M]}^\infty e^{-\alpha k/2}\lesssim  e^{-\alpha M/2}.\]  The same estimate holds for $\psi$.	By the definitions of $\varphi_2$ and $\psi_2$, we obtain \[\|\varphi_2\|_{L^1(\mu)}\lesssim e^{-\alpha M/2}\quad\text{and}\quad	\|\psi_2\|_{L^1(\mu)}\lesssim e^{-\alpha M/2}.\]
	
	Repeating the preceding arguments for $\varphi^2$ and $\psi^2$ gives \[\|\varphi_2\|_{L^2(\mu)}\lesssim e^{-\alpha M/2}\quad\text{and}\quad	\|\psi_2\|_{L^2(\mu)}\lesssim e^{-\alpha M/2}.\]
	
	On the other hand, applying Theorem \ref{main} to $\varphi_1$ and $\psi_1$, we get  \[\Big|\int(\varphi_1\circ f^n)\psi_1 d\mu -\Big(\int\varphi_1 d\mu\Big)\Big(\int \psi_1 d\mu\Big)\Big|\lesssim d^{-n/2}M^2.\] 
	
From the invariance of $\mu$, we have that $$\|\varphi_2\circ f^n\|_{L^p(\mu)}=\|\varphi_2\|_{L^p(\mu)}\quad\text{and}\quad\|\psi_2\circ f^n\|_{L^p(\mu)}=\|\psi_2\|_{L^p(\mu)}$$ for $1\leq p\leq \infty$. We proceed as follows,
	\begin{align*}
		&\big|\big\lp\mu,(\varphi\circ f^n)\psi\big\rp -\lp\mu,\varphi\rp\lp\mu, \psi \rp\big| \\
		&=\big|\big\lp\mu, (\varphi_1\circ f^n+\varphi_2\circ f^n) (\psi_1+\psi_2)\big\rp -\lp\mu, \varphi_1+\varphi_2\rp \lp\mu,\psi_1+\psi_2\rp\big|\\
		&\leq \big|\big\lp\mu,(\varphi_1\circ f^n)\psi_1\big\rp  -\lp\mu,\varphi_1\rp\lp\mu, \psi_1 \rp\big| +\big|\big\lp \mu, (\varphi_1\circ f^n)\psi_2\big\rp\big| +\big|\big\lp\mu, (\varphi_2\circ f^n)\psi_1\big\rp\big|\\
		&\quad\,\,+\big|\big\lp\mu, (\varphi_2\circ f^n)\psi_2\big\rp\big|+|\lp\mu,\varphi_2\rp\lp\mu,\psi_1\rp|+|\lp\mu,\varphi_1\rp\lp\mu,\psi_2\rp|+|\lp\mu,\varphi_2\rp\lp\mu,\psi_2\rp| \\
		& \leq\big|\big\lp\mu,(\varphi_1\circ f^n)\psi_1\big\rp  -\lp\mu,\varphi_1\rp\lp\mu, \psi_1 \rp\big|+M\|\varphi_2\|_{L^1(\mu)}+M\|\psi_2\|_{L^1(\mu)} \\
		& \quad\,\,+\|\varphi_2\|_{L^2(\mu)}\|\psi_2\|_{L^2(\mu)}+\|\varphi_2\|_{L^1(\mu)} +\|\psi_2\|_{L^1(\mu)}+\|\varphi_2\|_{L^1(\mu)}\|\psi_2\|_{L^1(\mu)}              \\
		&\lesssim  d^{-n/2}M^2+(2M+2 )e^{-\alpha M/2}+2e^{-\alpha M}. 
	\end{align*}
	
	Taking $M:=(n\log d)/\alpha$, we obtain the estimate \[d^{-n/2}M^2+(2M+2)e^{-\alpha M/2}+2e^{-\alpha M}\lesssim n^2d^{-n/2}.\] Therefore, \[\Big|\int(\varphi\circ f^n)\psi d\mu -\Big(\int\varphi d\mu\Big)\Big(\int \psi d\mu\Big)\Big|\lesssim n^2d^{-n/2}.\] The proof is finished.
\end{proof}

\begin{remark}\rm
	The constant $c$ in Theorem \ref{maincor} can be made more explicit, but this  requires a long complicated calculation as it corresponding to the two constants $c,\alpha$ in \eqref{mod}, and also the constant $c$ in Theorem \ref{main}. For example, the last one depends on the geometry of the open sets $U_1$ and $V_1$. Therefore, we choose  not to work on this direction in the present paper.
	\end{remark}

\end{document}